\documentclass[12pt,en]{elegantpaper}
\usepackage{extarrows}
\usepackage{mathrsfs}
\numberwithin{equation}{section}
\allowdisplaybreaks[4]
\everymath{\displaystyle}

\title{The Liouville-type equation and an Onofri-type inequality on closed 4-manifolds\footnotemark[2]}

\author{Xi-Nan Ma \and Tian Wu\footnotemark[1] \and Xiao Zhou}


\begin{document}

\date{}
\maketitle

\renewcommand{\thefootnote}{\fnsymbol{footnote}}

\footnotetext[1]{The corresponding author.}

\begin{abstract}

    In this paper, we study the Liouville-type equation
    \[\Delta ^2 u-\lambda_1\kappa\Delta u+\lambda_2\kappa^2(1-\mathrm e^{4u})=0\]
    on a closed Riemannian manifold \((M^4,g)\) with \(\operatorname{Ric}\geqslant 3\kappa g\) and \(\kappa>0\). Using the method of invariant tensors, we derive a differential identity to classify solutions within certain ranges of the parameters \(\lambda_1,\lambda_2\). A key step in our proof is a second-order derivative estimate, which is established via the continuity method. As an application of the classification results, we derive an Onofri-type inequality on the 4-sphere and prove its rigidity.
\end{abstract}

\tableofcontents
\section{Introduction}\label{sec:intro}

Let \((M^n,g)\) be a smooth, connected, and closed (i.e., compact and without boundary) Riemannian manifold. The Laplace-Beltrami operator is defined as \(\Delta:=\mathrm{div} \nabla\). Denote the Ricci curvature as \(\mathrm{Ric}\), and assume \(\mathrm{Ric} \geqslant (n-1)\kappa g\) with a positive constant \(\kappa\). We always equip \(M\) with its normalized measure \({\mathrm{dvol}}_g\), such that \(\mathrm{vol}(M)=\int_M{\mathrm{dvol}}_g=1\).

We use \((\mathbb{S}^n(r),g_c)\) to denote the Euclidean sphere of radius \(r\) in \(\mathbb R^{n+1}\), equipped with the canonical metric \(g_c\) induced from the embedding \(\mathbb{S}^n(r)\hookrightarrow \mathbb{R}^{n+1}\). The unit sphere is \(\mathbb{S}^n:=\mathbb{S}^n(1)\). Analogously, denote the Euclidean ball of radius \(r\) in \(\mathbb{R}^n\) by \(\mathbb{B}^{n}(r)\), and the unit ball is \(\mathbb{B}^{n}:=\mathbb{B}^{n}(1)\). Denote \(\omega_n=\mathcal{H}^{n-1}(\mathbb{S}^{n-1})=\frac{2\pi ^{\frac{n}{2}}}{\Gamma(\frac{n}{2})}\).

We consider the fourth-order Liouville-type equation
\begin{equation}\label{eq:Liouville}
    \Delta ^2 u - \lambda_1 \kappa \Delta u + \lambda_2 \kappa^2 (1-\mathrm e^{4u})=0\quad\text{on }(M^4,g),
\end{equation}
where \(\lambda_2>0\). We obtain a rigidity result as follows.

\begin{definition}\label{def:Liouville}
    Define two functions
    \begin{align*}
        A_1(x)=&~252756288 - 470882880x + 170417232x^2 + 11016224x^3 \\
        & -18178788x^4 + 3558300x^5-224785x^6,
    \end{align*}
    \begin{align*}
        A_2(x)=&~619127091200 - 710561935360x + 194617078784x^2 + 70308920320x^3 \\
        &- 43978140992x^4 + 3834335296x^5 + 1625924432x^6 \\
        &- 352127584x^7 + 4233764x^8 + 4104276x^9 - 281961x^{10}.
    \end{align*}

    For \(\frac{2}{129}(\sqrt{18673}-64)\leqslant x \leqslant 2\), define \(L_1(x)=4+x\).

    For \(-2\leqslant x < \frac{2}{129}(\sqrt{18673}-64)\), define
    \begin{align*}
        L_1(x)=\frac{2+x}{56(14-x)}\Big[996+332x-67x^2&+\sqrt[3]{A_1(x)+42(14-x)\sqrt{3A_2(x)}}\\
        &+\sqrt[3]{A_1(x)-42(14-x)\sqrt{3A_2(x)}}\Big].
    \end{align*}
\end{definition}
\begin{remark}
    \(L_1 \in C[-4,2],~L_1(-2)=0,~L_1(2)=6\).
\end{remark}

\begin{theorem}\label{thm:Liouville}
    Suppose that \((M^4,g)\) is a smooth, closed Riemannian manifold with \(\mathrm{Ric}\geqslant 3\kappa g\). For \(-2 < \lambda_1 \leqslant 2\) and \(0<\lambda_2\leqslant L_1(\lambda_1)\), any \(C^4\) solution to \eqref{eq:Liouville} must be 0, unless \(\lambda_1=2\), \(\lambda_2=6\), \((M^4,g)\) is isometric to \(\Big({\mathbb{S}}^4(\frac{1}{\sqrt{\kappa}}),g_c\Big)\), and there exists \(a\in {\mathbb{S}}^4(\sqrt{\kappa}),~t>0\) such that
    \begin{equation}\label{sol:Liouville}
        u(x)=-\log(\cosh t+\langle a,x\rangle_{\mathbb{R}^5} \sinh t),~x\in {\mathbb{S}}^4(\frac{1}{\sqrt{\kappa}}).
    \end{equation}
\end{theorem}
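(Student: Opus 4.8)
The plan is to treat \eqref{eq:Liouville} as a fourth-order analogue of the classical Obata--Lichnerowicz rigidity problem, with $\Delta^2-\lambda_1\kappa\Delta$ playing the role of a Paneitz-type operator and $\lambda_2\kappa^2(1-\mathrm e^{4u})$ the associated $Q$-curvature nonlinearity. First I would rescale the metric so that $\kappa=1$ (equivalently absorb $\kappa$ into the parameters), turning the hypothesis into $\operatorname{Ric}\geqslant 3g$. Integrating \eqref{eq:Liouville} over the closed manifold and using $\int_M\Delta^2 u=\int_M\Delta u=0$ yields the normalization $\int_M \mathrm e^{4u}\,\mathrm{dvol}_g=\operatorname{vol}(M)=1$, which fixes the zeroth-order gauge and is ultimately what pins a constant solution to $u\equiv 0$. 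I would also record the energy identity obtained by testing against $u$, namely $\int_M(\Delta u)^2+\lambda_1\int_M|\nabla u|^2+\lambda_2\int_M u(1-\mathrm e^{4u})=0$.

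The heart of the argument is the method of invariant tensors. Writing $w=\Delta u$, I would introduce a conformally weighted, trace-adjusted symmetric $2$-tensor of the schematic form
\[
T=\mathrm e^{\alpha u}\Big(\nabla^2 u+a\,du\otimes du+b\,|\nabla u|^2 g+c\,(\Delta u)\,g\Big),
\]
with free real parameters $\alpha,a,b,c$ to be chosen later. Using the Bochner--Weitzenb\"ock formula
\[
\tfrac{1}{2}\Delta|\nabla u|^2=|\nabla^2 u|^2+\langle\nabla u,\nabla\Delta u\rangle+\operatorname{Ric}(\nabla u,\nabla u),
\]
together with repeated integration by parts in which every occurrence of $\Delta^2 u$ and $\Delta w$ is eliminated through \eqref{eq:Liouville}, I would assemble a differential identity whose integrated form reads $\int_M \mathrm e^{\alpha u}\,Q(\nabla^2 u,\nabla u,w)+\int_M \mathrm e^{\alpha u}\,\operatorname{Ric}(\nabla u,\nabla u)\cdot(\cdots)=0$, where $Q$ is an algebraic quadratic form in the first- and second-order jets of $u$. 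The Ricci hypothesis enters by bounding the curvature term from below, $\operatorname{Ric}(\nabla u,\nabla u)\geqslant 3|\nabla u|^2$, so that the entire left-hand side becomes a sum of manifestly nonnegative contributions.

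The admissible window $0<\lambda_2\leqslant L_1(\lambda_1)$ should emerge as precisely the feasibility condition for choosing $(\alpha,a,b,c)$ so that the pointwise form $Q+3(\cdots)|\nabla u|^2$ is positive semidefinite on symmetric $2$-tensors (with the trace relation $\operatorname{tr}\nabla^2 u=w$ built in). I expect this positivity to reduce, after eliminating the auxiliary parameters, to the nonnegativity of a cubic in $\lambda_2$ whose coefficients are polynomials in $\lambda_1$; the explicit Cardano root is then the boundary value $L_1(\lambda_1)$, and the polynomials $A_1,A_2$ of Definition~\ref{def:Liouville} arise as the coefficient data and (up to the factor $3$) the discriminant of this cubic. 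Verifying that the cubic branch governs $-2\leqslant\lambda_1<\frac{2}{129}(\sqrt{18673}-64)$ while the linear branch $L_1=4+\lambda_1$ takes over on the remaining interval is the main algebraic obstacle and the source of the heavy coefficients.

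Two analytic points remain. The global integration by parts and the control of the weighted integrals require the second-order derivative estimate, which I would obtain by the continuity method: deforming $\lambda_2$ from a small value (where a comparison/maximum-principle argument forces $u\equiv 0$) up toward $L_1(\lambda_1)$, one shows the parameter set admitting only the trivial solution is open and closed, using a uniform bound on $\|\nabla^2 u\|$ to pass to limits along the deformation. Finally comes the rigidity analysis: once the integral identity forces every nonnegative term to vanish, we get $T\equiv 0$ and $\operatorname{Ric}(\nabla u,\nabla u)=3|\nabla u|^2$ pointwise. The vanishing of $T$ is an overdetermined system whose only solutions are constants---hence $u\equiv 0$ by the normalization---except where $Q$ is merely semidefinite, which I expect to occur exactly at $(\lambda_1,\lambda_2)=(2,6)$. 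There equality in the Bochner term forces $\operatorname{Ric}\equiv 3g$ and $\nabla^2 u$ to take the Obata form of a first spherical eigenfunction, so $(M^4,g)\cong\big(\mathbb{S}^4(1/\sqrt{\kappa}),g_c\big)$ and $u$ is the stated conformal factor \eqref{sol:Liouville}.
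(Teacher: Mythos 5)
Your global skeleton---invariant tensors, a weighted divergence identity whose positivity is forced by \(\operatorname{Ric}\geqslant 3\kappa g\), a feasibility condition that reduces to the sign of a cubic in \(\lambda_2\) with coefficients polynomial in \(\lambda_1\) (whose Cardano root is \(L_1\), explaining \(A_1,A_2\)), and Obata-type rigidity at \((\lambda_1,\lambda_2)=(2,6)\)---matches Section \ref{sec:Liouville} of the paper closely. The genuine gap is the second-order derivative estimate, which you treat as a routine ``analytic point'' and propose to prove by a continuity method in the \emph{parameter} \(\lambda_2\): deform \(\lambda_2\) upward and show the set of parameters admitting only the trivial solution is open and closed, ``using a uniform bound on \(\|\nabla^2 u\|\) to pass to limits.'' This is circular and would fail. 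The uniform bound is exactly what you are trying to establish, and neither openness nor closedness of a \emph{non-existence} set follows from soft limiting arguments: a sequence of nontrivial solutions \(u_k\) at \(\lambda_2^{(k)}\to\lambda_2^0\) may converge to the trivial solution, so the limit produces no contradiction; ruling that out would require blow-up or bifurcation analysis you do not supply. You also misstate what the estimate is for: on a closed manifold, integration by parts costs nothing. The estimate is needed \emph{pointwise}, to guarantee that the coefficient \(-\frac 2 3[|\nabla u|^2+7\Delta u-2(8+\lambda_1)\kappa]\) multiplying \(E_{ij}E^{ij}+\mathscr R\) in the identity \eqref{id:Liouville} is nonnegative; without that sign the integrand is not a sum of nonnegative terms and the whole scheme collapses.

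What the paper does instead (Section \ref{sec:estimate}) is a continuity method in the \emph{solution}, not the parameter: it deforms the given solution linearly to a constant, \(u_t=tu+(1-t)\), observes that every \(u_t\) satisfies the differential inequality \(\Delta^2u_t-\lambda_1\kappa\Delta u_t+\lambda_2\kappa^2>0\), forms \(\Phi_t=\mathrm e^{a_3u_t}(\Delta u_t+a_1|\nabla u_t|^2-a_2\kappa)\) with \(a_1=\frac{6-\lambda_1}{2a_2}=-a_3\), and shows via Lemma \ref{lem:est} that \(\mathscr L_t\Phi_t\geqslant0\) for a suitable operator; since \(\Phi_0<0\) and \(\Phi_t\equiv0\) is excluded by the divergence theorem, the strong maximum principle and continuity in \(t\) (Lemmas \ref{lem:strong-extreme}, \ref{lem:continuity}) give \(\Phi_1<0\), i.e.\ the pointwise bound of Proposition \ref{prop:est-Liouville}. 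No compactness or a priori bounds in the parameter are ever needed. A second, lesser issue: your single-tensor ansatz \(T=\mathrm e^{\alpha u}(\nabla^2u+\cdots)\), with a quadratic form \(Q\) in ``first- and second-order jets,'' cannot close the computation for a fourth-order equation. The paper needs the full hierarchy of Proposition \ref{prop:invariant-Liouville}: the trace-free tensor \(E_{ij}\), the one-form \(F_i\) containing the third-order term \((\Delta u)_{,i}\), and the scalar \(G\) containing \(\Delta^2u\), linked by divergence relations, together with the auxiliary inequality \(|\nabla u|^2E_{ij}E^{ij}\geqslant\frac 4 3E_iE^i\); the resulting quadratic form mixes \(E_i\) and \(F_i\), and the case distinction at \(\lambda_1=\frac{2}{129}(\sqrt{18673}-64)\) comes from two different choices of the free constant \(c\) in \eqref{id:Liouville}, not from a single semidefiniteness threshold as you suggest.
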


As an application, we obtain an Onofri-type inequality on the 4-sphere and its rigidity. This result extend Beckner's result in \cite{Bec93}, which corresponds to the case \(\lambda=\frac{1}{48}\) on 4-spheres.
\begin{theorem}\label{thm:Onofri}
    For any \(f\in H^2\Big({\mathbb{S}}^4(\frac{1}{\sqrt{\kappa}})\Big)\), \(\lambda\geqslant\frac{1}{48}\), the Onofri-type inequality
    \begin{equation}\label{ineq:Onofri}
        \log \int_{{\mathbb{S}}^4(\frac{1}{\sqrt{\kappa}})} \mathrm{e}^f - \int_{{\mathbb{S}}^4(\frac{1}{\sqrt{\kappa}})} f \leqslant \frac{\lambda}{\kappa^2}\int_{{\mathbb{S}}^4(\frac{1}{\sqrt{\kappa}})} |\Delta f|^2 + \frac{1}{\kappa} (\frac{1}{8}-4\lambda) \int_{{\mathbb{S}}^4(\frac{1}{\sqrt{\kappa}})} |\nabla f|^2.
    \end{equation}
    holds. Moreover, unless \(f\) is constant, the equality is attained iff \(\lambda=\frac{1}{48}\), with
    \[f(x)=-4\log(\langle a,x\rangle_{\mathbb{R}^5} +c),~x\in {\mathbb{S}}^4(\frac{1}{\sqrt{\kappa}}),~a\in \mathbb{R}^5 \setminus \{0\},~c>\frac{|a|}{\sqrt{\kappa}}.\]
\end{theorem}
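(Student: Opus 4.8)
The plan is to derive the Onofri-type inequality as a corollary of the classification Theorem~\ref{thm:Liouville} via a variational argument, exploiting the fact that the Liouville equation \eqref{eq:Liouville} is precisely the Euler--Lagrange equation of the functional whose nonnegativity is the asserted inequality. Let me think about how this should go.

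Let me work on the unit sphere first (setting $\kappa=1$) and recover the general radius by scaling at the end. Define the functional

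$$J_\lambda[f] = \lambda \int |\Delta f|^2 + (\tfrac18 - 4\lambda)\int|\nabla f|^2 - \log\int e^f + \int f.$$

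The claim is $J_\lambda[f]\geq 0$ for $\lambda \geq 1/48$. Since $J$ is invariant under adding constants to $f$ (both the $\log\int e^f$ and $\int f$ terms shift by the same constant, and the derivative terms are unchanged), we may normalize $\int f = 0$ or equivalently $\int e^f = $ fixed.

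The substitution that connects to the Liouville equation is $f = 4u$ (matching the $e^{4u}$ in \eqref{eq:Liouville}) together with the constraint $\int e^{4u}=1$. Under this substitution, critical points of $J_\lambda$ subject to the constraint satisfy the Euler--Lagrange equation $\Delta^2 u - \lambda_1\Delta u + \lambda_2(1-e^{4u})=0$ for appropriate $\lambda_1,\lambda_2$ depending on $\lambda$; the Lagrange multiplier for the constraint produces exactly the $(1-e^{4u})$ structure. I need to compute the correspondence between $\lambda$ and $(\lambda_1,\lambda_2)$ carefully so that the boundary case $\lambda=1/48$ lands on $(\lambda_1,\lambda_2)=(2,6)$, which is precisely the rigidity case in Theorem~\ref{thm:Liouville}.

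Here is the outline of the steps.

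\begin{enumerate}
\item \textbf{Reduction to the extremal case.} First observe it suffices to prove the inequality for $\lambda = 1/48$: since $(\tfrac18 - 4\lambda)$ decreases and $\lambda$ increases as $\lambda$ grows, I must check the sign of the net dependence. In fact, writing $J_\lambda - J_{1/48} = (\lambda - \tfrac1{48})\big(\int|\Delta f|^2 - 4\int|\nabla f|^2\big)$, the inequality for all $\lambda \geq 1/48$ follows from the case $\lambda = 1/48$ provided $\int|\Delta f|^2 \geq 4\int|\nabla f|^2$ for all $f$. This last inequality is a spectral fact on $\mathbb{S}^4(1/\sqrt\kappa)$: by integration by parts $\int|\nabla f|^2 = -\int f\,\Delta f$ and $\int|\Delta f|^2 = \int(\Delta f)^2$, and decomposing into spherical harmonics $f=\sum f_k$ with eigenvalues $\mu_k = k(k+3)\kappa$ (for $\kappa=1$, $\mu_k=k(k+3)$), the desired inequality becomes $\sum \mu_k^2\|f_k\|^2 \geq 4\sum\mu_k\|f_k\|^2$, i.e. $\mu_k \geq 4$ for every $k\geq 1$; since $\mu_1 = 4$ this holds with equality only on the first eigenspace. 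So the monotonicity in $\lambda$ holds, and everything reduces to $\lambda=1/48$.

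\item \textbf{Existence of a minimizer.} For $\lambda=1/48$ the quadratic part is $\frac1{48}\int|\Delta f|^2 - \frac1{24}\int|\nabla f|^2 = \frac1{48}\big(\int|\Delta f|^2 - 2\int|\nabla f|^2\big)$; after normalizing $\int f=0$ and using the spectral gap (here the relevant Adams--Moser--Trudinger embedding $H^2(\mathbb{S}^4)\hookrightarrow$ exponential class) one shows $J_{1/48}$ is coercive and weakly lower semicontinuous on $\{f\in H^2 : \int f = 0\}$, hence attains its infimum at some $f_0$.

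\item \textbf{Euler--Lagrange equation and classification.} The minimizer $f_0$ satisfies the Euler--Lagrange equation, which after the scaling $f_0 = 4u$ and the normalization becomes exactly \eqref{eq:Liouville} with $(\lambda_1,\lambda_2)=(2,6)$ on $\mathbb{S}^4(1/\sqrt\kappa)$. By Theorem~\ref{thm:Liouville}, every $C^4$ solution is either $u\equiv 0$ (so $f_0$ constant) or of the form \eqref{sol:Liouville}. Elliptic regularity upgrades the $H^2$ minimizer to $C^4$, so the classification applies.

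\item \textbf{Evaluation on the classified solutions.} It remains to verify that $J_{1/48}$ vanishes on all solutions of the form \eqref{sol:Liouville} and on constants, and is strictly positive otherwise. Substituting $u(x) = -\log(\cosh t + \langle a,x\rangle\sinh t)$ (equivalently $f = 4u$, which matches the stated extremal $f = -4\log(\langle a,x\rangle + c)$ after absorbing constants via conformal/Möbius normalization and the shift-invariance of $J$), a direct computation — using the conformal transformation laws of the sphere and the fact that these are exactly the conformal factors of Möbius transformations — shows $J_{1/48}[f]=0$. This pins down the infimum value as $0$ and identifies the full family of extremals.
\end{enumerate}

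The main obstacle I anticipate is \emph{step 4}, matching the value of the functional on the one-parameter family \eqref{sol:Liouville} and confirming it is identically zero, which requires the sharp constant in the underlying Beckner/Adams inequality to be consistent with the coefficient $1/48$; this is where the delicate interplay between the second-order Paneitz-type operator on $\mathbb{S}^4$ and the exponential nonlinearity is tested. A secondary difficulty is establishing coercivity in step 2 with the indefinite quadratic form $\int|\Delta f|^2 - 2\int|\nabla f|^2$, which is only nonnegative (vanishing on the first eigenspace) rather than coercive on all of $H^2$; one handles this by splitting off the first-eigenspace (coordinate-function) component, which is precisely the component killed by the conformal invariance, and working on its orthogonal complement where a genuine spectral gap restores coercivity. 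Finally, the reduction in step 1 shows the inequality is strict for $\lambda > 1/48$ unless $f$ is constant, giving the stated rigidity that equality with nonconstant $f$ forces $\lambda = 1/48$.
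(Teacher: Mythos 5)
There is a genuine gap, and it sits exactly where you flagged your ``secondary difficulty'': step 2 is false as stated. At the critical value \(\lambda=\tfrac1{48}\) the functional \(J_{1/48}\) is \emph{not} coercive on \(\{f\in H^2:\int f=0\}\), and no splitting off of the first eigenspace can restore coercivity: the extremal family itself, \(f_t=4u_t\) with \(u_t(x)=-\log(\cosh t+\langle a,x\rangle\sinh t)\) as in \eqref{sol:Liouville}, satisfies \(J_{1/48}[f_t]=0\) for every \(t\) while \(\|f_t\|_{H^2}\to\infty\) as \(t\to\infty\), so there are unbounded sequences along which the functional does not blow up. (Incidentally, your computation of the quadratic part has a sign error: at \(\lambda=\tfrac1{48}\) one has \(\tfrac18-4\lambda=+\tfrac1{24}\), so the quadratic part is \(\tfrac1{48}\int(|\Delta f|^2+2|\nabla f|^2)\), the Paneitz form, which is positive definite on mean-zero functions; the non-coercivity comes entirely from the term \(-\log\int\mathrm e^{f}\), which exactly cancels the quadratic growth along the concentrating family.) Consequently the direct method does not produce a minimizer, and steps 3--4 have nothing to act on. This is precisely the hard analytic content of Beckner's sharp inequality, which the paper does not reprove: it \emph{cites} Theorem 1 of \cite{Bec93} for the endpoint \(\lambda_1=2\) (i.e.\ \(\lambda=\tfrac1{48}\)), gets the endpoint \(\lambda_1=-4\) from the Poincar\'e inequality \(\int|\Delta f|^2\geqslant 4\kappa\int|\nabla f|^2\), interpolates linearly in \(\lambda_1\), and then uses Theorem~\ref{thm:Liouville} \emph{only} to characterize the equality case --- for which one never needs existence of a minimizer, since a function attaining equality is automatically a critical point and hence solves the Euler--Lagrange equation \eqref{eq:Liouville} with \(\lambda_2=4+\lambda_1\).

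Your strategy of deducing the inequality itself from the classification theorem is more ambitious than the paper's and could in principle be carried out, but the reduction in your step 1 runs in the wrong direction for that purpose. The workable version is: for \(\lambda>\tfrac1{48}\) (equivalently \(\lambda_1<2\)) Adams' inequality \emph{with a non-sharp additive constant} does give coercivity of \(J_\lambda\), so a minimizer exists there; its Euler--Lagrange equation is \eqref{eq:Liouville} with \(\lambda_1=\tfrac1{8\lambda}-4\in[\tfrac32,2)\) say and \(\lambda_2=4+\lambda_1=L_1(\lambda_1)\), so Theorem~\ref{thm:Liouville} forces the minimizer to be constant and hence \(J_\lambda\geqslant0\); finally let \(\lambda\downarrow\tfrac1{48}\) and use \(J_\lambda[f]\to J_{1/48}[f]\) for each fixed \(f\in H^2\) to recover the sharp case. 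Reducing everything to \(\lambda=\tfrac1{48}\) first, as you do, and then minimizing there is exactly the step that cannot work without a concentration-compactness analysis (``either the infimum is \(0\) or a minimizer exists''), which your proposal does not supply. Your step 1 monotonicity computation and the equality discussion in steps 3--4 are essentially sound and parallel the paper's rigidity argument; the missing piece is solely the existence mechanism for the inequality itself.
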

\begin{remark}
    Theorem \ref{thm:Onofri} corresponds to the equation \eqref{eq:Liouville} by taking \(\lambda=\frac{1}{8(4+\lambda_1)}\), which is to say, the Onofri-type inequality \eqref{ineq:Onofri} is equivalent to 
    \begin{equation}\label{ineq:Onofri-lambda}
        8\lambda_2 \kappa^2 \Big(\log \int_{{\mathbb{S}}^4(\frac{1}{\sqrt{\kappa}})} \mathrm{e}^f - \int_{{\mathbb{S}}^4(\frac{1}{\sqrt{\kappa}})} f \Big) \leqslant \int_{{\mathbb{S}}^4(\frac{1}{\sqrt{\kappa}})} |\Delta f|^2 + \lambda_1 \kappa \int_{{\mathbb{S}}^4(\frac{1}{\sqrt{\kappa}})} |\nabla f|^2,
    \end{equation}        
    where \(-4 < \lambda_1 \leqslant 2,~0<\lambda_2\leqslant 4+\lambda_1\). We can observe from the proof of Theorem \ref{thm:Onofri} that for a closed manifold \((M^4,g)\), if the Onofri-type inequality is true when \(\lambda=\frac{1}{48}\), then the rigidity result also holds on this manifold. Besides, the Onofri-type inequality \eqref{ineq:Onofri} degenerates to the Poincaré inequality \eqref{ineq:Poincaré-H2} as \(\lambda\to\infty\).
\end{remark}

Figure \ref{fig:Liouville} describes ranges of \((\lambda_1,\lambda_2)\) associated with two theorems. The rigidity result of the equation \eqref{eq:Liouville} holds in the green region, defined by \(-2<\lambda_1\leqslant 2\) and \(0<\lambda_2\leqslant L_1 (\lambda_1)\). The Onofri inequality \eqref{ineq:Onofri-lambda} holds in both the red and the green region, defined by \(-4<\lambda_1\leqslant 2\) and \(0<\lambda_2\leqslant 4+\lambda_1\). It's nature to raise the following conjecture about the red region.

\begin{conjecture}
    If \(-4<\lambda_1\leqslant-2\) with \(0 <\lambda_2\leqslant 4+\lambda_1\), or \(-2\leqslant\lambda_1<\frac{2}{129}(\sqrt{18673}-64)\) with \(L_1(\lambda_1)<\lambda_2\leqslant 4+\lambda_1\), then any \(C^4\) solution to \eqref{eq:Liouville} must be 0.
\end{conjecture}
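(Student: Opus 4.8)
As the final statement is a conjecture, what follows is an attack strategy rather than a complete argument, together with the point at which it stalls. The cleanest framing is variational. Writing $f=4u$ and using the normalization $\int_M\mathrm e^{4u}=1$ that follows from integrating \eqref{eq:Liouville} (both $\int_M\Delta^2u$ and $\int_M\Delta u$ vanish on a closed manifold), equation \eqref{eq:Liouville} is exactly the Euler--Lagrange equation of
\[ F(f)=\int_M|\Delta f|^2+\lambda_1\kappa\int_M|\nabla f|^2-8\lambda_2\kappa^2\Big(\log\int_M\mathrm e^{f}-\int_M f\Big). \]
By Theorem \ref{thm:Onofri}, in the form \eqref{ineq:Onofri-lambda}, one has $F\geqslant0$ throughout $-4<\lambda_1\leqslant2,\ 0<\lambda_2\leqslant4+\lambda_1$, with $F(0)=0$; moreover in the conjectural region the inequality is strict off constants, since equality forces $\lambda_1=2,\lambda_2=6$. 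Thus $u\equiv0$ is the \emph{unique global minimizer} of $F$ there, and the conjecture amounts to the stronger assertion that it is the \emph{unique critical point}. Note the conjectural region is precisely the part of the Onofri region left uncovered by Theorem \ref{thm:Liouville}: the strip $\lambda_1\leqslant-2$, and the gap $L_1(\lambda_1)<\lambda_2\leqslant4+\lambda_1$ opening for $\lambda_1<\frac{2}{129}(\sqrt{18673}-64)$.

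The first thing I would try is to push the invariant-tensor identity behind Theorem \ref{thm:Liouville} past the curve $\lambda_2=L_1(\lambda_1)$. That identity expresses a positive multiple of $\int_M|\mathring{\nabla}^2(\mathrm e^{-u})|^2$ — the tensor that vanishes on the model solutions \eqref{sol:Liouville} — as minus a quadratic form $Q(\lambda_1,\lambda_2)$ in the weighted moments of $u$, and $L_1(\lambda_1)$ is exactly the locus where $Q$ loses nonnegativity, the cube-root data $A_1,A_2$ of Definition \ref{def:Liouville} being the discriminant of the governing cubic. Consequently the identity in its present form is powerless for $\lambda_2>L_1(\lambda_1)$, and the central obstacle is to construct a genuinely better invariant tensor — one using third-order derivatives of $u$ and curvature information beyond the scalar bound $\mathrm{Ric}\geqslant3\kappa g$ — whose quadratic form stays nonnegative up to the Onofri threshold $\lambda_2=4+\lambda_1$. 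Whether such a tensor exists from the Ricci lower bound alone is unclear, and this is the heart of the difficulty.

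Failing a sharper identity, I would attack the problem variationally, exploiting $F\geqslant0$ directly. Since $u\equiv0$ is the unique minimizer, it is enough to exclude nonminimizing critical points. One route is to extend the continuity-method second-order estimate across the threshold, secure uniform $C^2$ (hence $C^{4,\alpha}$) bounds on any solution, and run a Leray--Schauder degree or connectedness argument anchored at $u\equiv0$, so that a new solution branch could only appear through a bifurcation that one then rules out by a second-variation computation $\delta^2F$. The obstruction is the nonconvexity introduced by the $\log\int_M\mathrm e^{f}$ term: $F\geqslant0$ by itself does not forbid saddle points, and as $(\lambda_1,\lambda_2)$ enters the red region one cannot a priori exclude that the estimate degenerates and a nontrivial solution bifurcates — precisely the scenario the conjecture denies.

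I expect the genuinely hard region to be the far-left strip $-4<\lambda_1\leqslant-2$ and the corner where $4+\lambda_1\to0$: there the term $\lambda_1\kappa\int_M|\nabla f|^2$ is strongly negative and neither the differential identity nor any manifest convexity survives. Resolving the conjecture in full will likely require a new ingredient — a quantitative (stability) strengthening of \eqref{ineq:Onofri-lambda} bounding $F$ below by the distance to the minimizer, or a monotonicity formula along a flow deforming any solution to $u\equiv0$.
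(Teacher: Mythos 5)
You were asked to prove a statement that the paper itself explicitly labels a conjecture: the paper contains no proof of it, and states it precisely to delimit the region its own method cannot reach. So there is no proof of the paper's to compare against, and your submission --- by your own account an attack plan that stalls --- does not close the gap either. On the factual side, your framing is faithful to the paper: integrating \eqref{eq:Liouville} does give $\int_M \mathrm{e}^{4u}=1$; solutions are then exactly the normalized critical points of the functional $J[f;\lambda_1]$ of Section \ref{sec:Onofri} (with $8\lambda_2\kappa^2$ in place of $8(4+\lambda_1)\kappa^2$); the rigidity part of Theorem \ref{thm:Onofri} makes $u\equiv 0$ the unique global minimizer throughout the conjectural region; that region is precisely the complement of the region of Theorem \ref{thm:Liouville} inside the Onofri region; and $L_1$ (Definition \ref{def:Liouville}, i.e.\ Cardano's formula for the cubic $Q_1$ of Case 2) is exactly the threshold at which the discriminant of the quadratic form in \eqref{ineq:Liouville-4} changes sign, so the identity \eqref{id:Liouville} is indeed powerless beyond it. One imprecision worth flagging: the identity alone does not produce $L_1$; the a priori estimate of Proposition \ref{prop:est-Liouville} (the substitution of $h\leqslant 0$ for $\Delta u$, obtained by the continuity method of Section \ref{sec:estimate}) enters essentially in Case 2, so any improvement would have to upgrade both the invariant tensors and that second-order estimate, not the identity in isolation.

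The genuine gap is that neither of your two routes is executed, and each stalls at a real obstruction rather than a technical one. For the identity route you list desiderata for a better tensor but exhibit no candidate, and it is unknown whether one exists under the bare hypothesis $\operatorname{Ric}\geqslant 3\kappa g$. For the variational route, note that nonnegativity of the functional plus uniqueness of the minimizer can never suffice by itself: at the corner $(\lambda_1,\lambda_2)=(2,6)$ the functional is still nonnegative, yet on the round sphere the noncompact family \eqref{sol:Liouville} of nonconstant critical points exists. Hence any degree, bifurcation, or second-variation argument must quantify the strict inequalities $\lambda_1<2$ and the distance of $(M,g)$ from the round sphere --- exactly the quantitative stability ingredient you defer to future work, and the same place where the uniform $C^2$ estimate is not known to survive past $L_1$. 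In short: your proposal is a reasonable and accurately calibrated research plan, consistent with the paper's own assessment of the difficulty, but it is not a proof, and no proof exists in the paper to measure it against.
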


\begin{figure}
    \centering
    \includegraphics[width=0.75\linewidth]{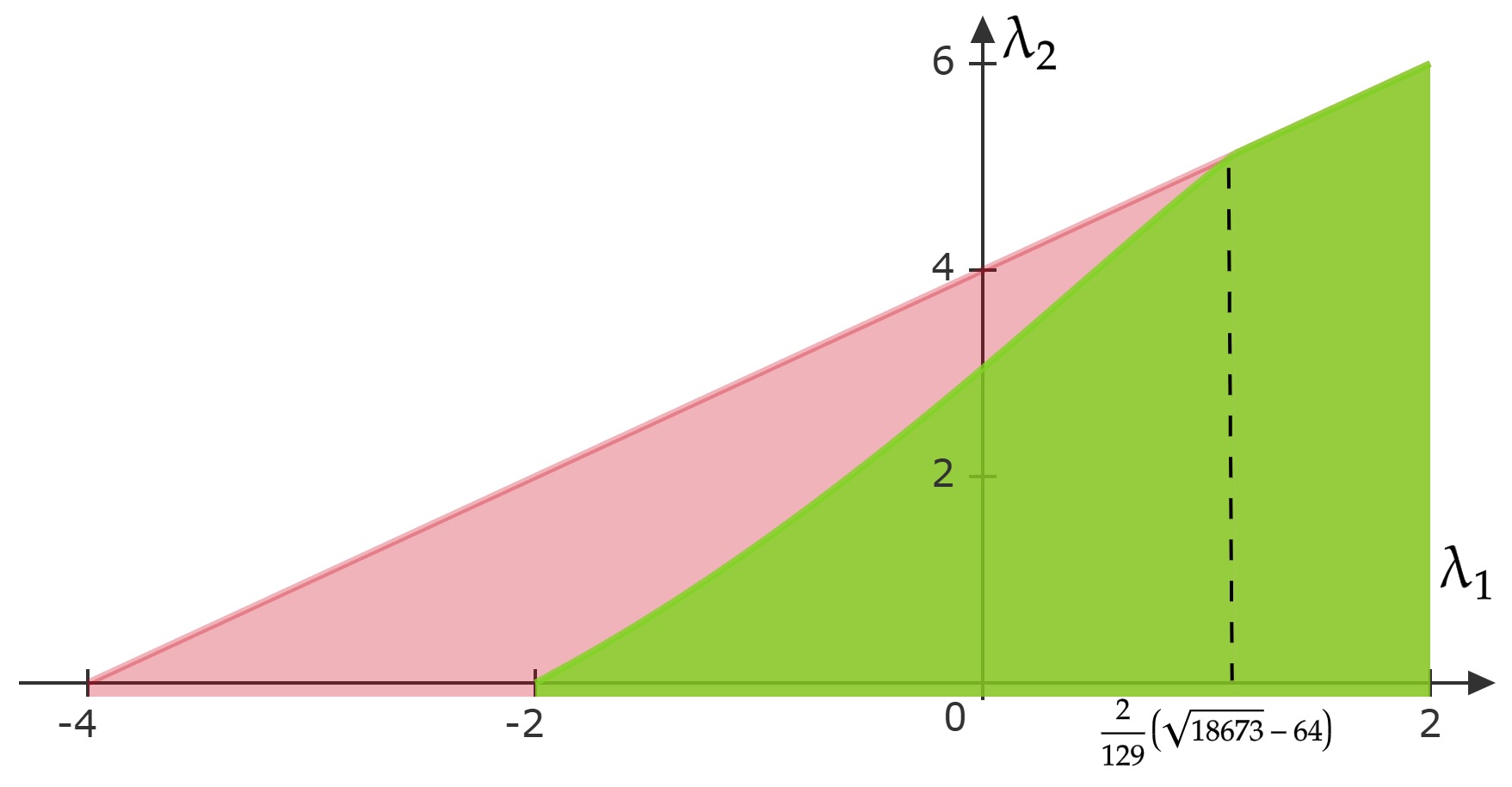}
    \caption{The classification result of the equation \eqref{eq:Liouville} holds in the green region defined by \(-2<\lambda_1\leqslant 2,~0<\lambda_2\leqslant L_1 (\lambda_1)\). The Onofri inequality \eqref{ineq:Onofri-lambda} holds in the red and the green region defined by \(-4<\lambda_1\leqslant 2,~0<\lambda_2\leqslant 4+\lambda_1\)}
    \label{fig:Liouville}
\end{figure}

When \(\lambda_1=2\) and \(\lambda_2=6\), the equation \eqref{eq:Liouville} corresponds to the Q-Yamabe problem, that is, finding conformal metrics with constant Q-curvature. Precisely, it is deduced from the transformation law for the Q-curvature under a conformal change of metric 
\begin{equation*}
\begin{cases}
\mathrm{P}_g u+\mathrm{Q}_g=\mathrm{Q}_{\mathrm{e}^{2u}g}\mathrm{e}^{4u}\quad\text{on }(M^4,g)\\
\mathrm{P}_g u=\frac{n-4}{2}\mathrm{Q}_{u^{\frac{4}{n-4}}g} u^{\frac{n+4}{n-4}},~u>0\quad\text{on }(M^n,g)\quad\text{if }n\neq 4
\end{cases}
\end{equation*}
where \(\mathrm{P}_g\) is the Paneitz-Branson operator (see \cite{Bra85}) defined as
\[\mathrm{P}_g :=\Delta_g^2-\operatorname{div}_g\Big(\Big(\frac{n^2-4n+8}{2(n-1)(n-2)}\mathrm{S}_g g-\frac{4}{n-2}\mathrm{Ric}_g\Big)\nabla u \Big)+\frac{n-4}{2}\mathrm{Q}_g,
\]
and \(\mathrm{Q}_g\) is the Q-curvature defined as
\[\mathrm{Q}_g :=\frac{1}{2(n-1)}\Delta_g \mathrm{S}_g +\frac{n^3-4n^2+16n-16}{8(n-1)^2(n-2)^2}{\mathrm{S}_g}^2 -\frac{2}{(n-2)^2}|\mathrm{Ric}_g|^2.
\]
As a higher dimension version \(n\geqslant5\), the corresponding equation is
\begin{equation}\label{eq:subcritical}
    \Delta ^2 u-\lambda_1\kappa\Delta u+\lambda_2\kappa^2(u-u^\alpha)=0\quad\text{on }(M^n,g),
\end{equation}
where \(1<\alpha\leqslant\frac{n+4}{n-4}\). When \(\lambda_1=\frac{n^2-2n-4}{2}\) and \(\lambda_2=\frac{n(n-2)(n+2)(n-4)}{16}\), the equation \eqref{eq:subcritical} corresponds to the Q-Yamabe problem. Rigidity results of the equation \eqref{eq:subcritical} are studied by the authors in \cite{MWZ25}.

Let's recall related second-order semilinear equations in \(\mathbb R^n\). Gidas-Spruck \cite{GS81} proved non-existence of positive solution to \(-\Delta u=u^{\alpha}\) in subcritical case \(1<\alpha<\frac{n+2}{n-2}\) with \(n\geqslant 3\). In the critical case \(\alpha=\frac{n+2}{n-2}\), Caffarelli-Gidas-Spruck \cite{CGS89} (also see Chen-Li \cite{CL91}) classified positive solutions via the moving plane method. Chen-Li \cite{CL91} classified solutions to the Liouville equation \(-\Delta u=\mathrm e^{2u}\) in \(\mathbb R^2\) under the finite-energy condition.

On closed manifolds with \(\operatorname{Ric}\geqslant(n-1)g\), corresponding second-order equations were also studied widely via the differential identities method. Obata \cite{Oba71} classified all positive solutions to \(\Delta u-\frac{n(n-2)}{4}(u-u^{\alpha})=0\), and gave the uniqueness result of the Yamabe problem. Bidaut-Véron and Véron \cite{VV91} extended Gidas-Spruck's results by improving Obata's identity and proved that positive solutions of \(\Delta u-\lambda(u-u^{\alpha})=0\) must be 1 in the subcritical case \(1<\alpha<\frac{n+2}{n-2}\) with \(0<\lambda\leqslant\frac{n}{\alpha-1}\). This result improved the Sobolev inequality as
\[\Big(\int_M|f|^q\Big)^{\frac 2 q}-\int_M f^2\leqslant\frac{q-2}{n}\int_M|\nabla f|^2,\quad f\in H^1(M),\quad 2\leqslant q\leqslant 2^*=\frac{2n}{n-2}.\]
By substituting \(f\) with \(1+\frac g q\) and letting \(q\to\infty\) on 2-manifolds, the sharp Onofri inequality
\[\log\int_M\mathrm{e}^f-\int_M f\leqslant\frac 1 4\int_M|\nabla f|^2,\quad f\in H^1(M),\quad n=2\]
holds. Dolbeault-Esteban-Jankowiak \cite{DEJ16} obtained the rigidity result of the Liouville equation \(\Delta u-\lambda(1-\mathrm e^{2u})=0\) on 2-manifolds with \(\lambda\leqslant1\). This result verified the rigidity of the sharp Onofri inequality. We recommend readers to see \cite{Ono82,DEJ15} for more on the Onofri inequality.

Let's focus on fourth-order equations. Lin \cite{Lin98} classified solutions to \(\Delta^2u=\mathrm e^{4u}\) in \(\mathbb R^4\) and positive solutions to \(\Delta^2u=u^\alpha\) in \(\mathbb R^n\) with \(n\geqslant5\) and \(1<\alpha\leqslant\frac{n+4}{n-4}\) by the moving plane method. He first classified the conformal metrics with constant Q-curvature in the case of the standard sphere. On closed manifolds, the following results were obtained by Vétois \cite{Vét24} and Li-Wei \cite{LW25}.

\begin{theorem}[\cite{Vét24,LW25}]\label{thm:Vetois}
    Suppose that \((M^4,g)\) is a smooth, closed, Einstein Riemannian manifold with \(\mathrm{Ric}\geqslant 3g\). If \(\kappa=1\), \(\lambda_1=2\), \(\lambda_2\leqslant 6\), and \(M\) is not conformally diffeomorphic to \(\mathbb S^4\) when \(\lambda_2=6\). Then any \(C^4\) solution to \eqref{eq:Liouville} must be 0.
\end{theorem}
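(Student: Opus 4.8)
The plan is to follow the differential-identity (Obata–Bidaut-Véron–Véron) philosophy adapted to the fourth-order operator, assuming throughout that $u$ is a given $C^4$ solution of \eqref{eq:Liouville}. The central object will be a traceless symmetric $2$-tensor built invariantly from the derivatives of $u$, of the schematic form
\[
T_{ij} = \nabla_i\nabla_j u - \tfrac14(\Delta u)\,g_{ij} + a\Big(\nabla_i u\,\nabla_j u - \tfrac14|\nabla u|^2 g_{ij}\Big),
\]
where the coefficient $a$, together with further auxiliary scalars weighting $\mathrm{e}^{4u}$, $|\nabla u|^2$ and $\Delta u$, are parameters fixed later. I would integrate a weighted squared norm $\int_M |T|^2\,\rho\,\mathrm{dvol}_g$ against a positive weight $\rho$ adapted to the nonlinearity and repeatedly integrate by parts, feeding \eqref{eq:Liouville} in to eliminate $\Delta^2 u$. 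The Bochner–Weitzenböck identity
\[
\tfrac12\Delta|\nabla u|^2 = |\nabla^2 u|^2 + \langle\nabla u,\nabla\Delta u\rangle + \mathrm{Ric}(\nabla u,\nabla u),
\]
combined with the hypothesis $\mathrm{Ric}\geqslant 3\kappa g$, converts the curvature contributions into a sign-definite term. Collecting everything should yield an identity
\[
\int_M |T|^2\,\rho + \int_M \big(\text{further sums of squares}\big) = \int_M Q(\lambda_1,\lambda_2)\cdot(\text{nonnegative data}),
\]
with $Q$ an explicit quadratic form in the parameters.

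The coefficients $a,\rho,\dots$ must be tuned so that $Q(\lambda_1,\lambda_2)\leqslant 0$ precisely when $0<\lambda_2\leqslant L_1(\lambda_1)$; this is exactly where the cubic-in-a-radical formula for $L_1$ and the polynomials $A_1,A_2$ of Definition \ref{def:Liouville} should emerge, as the boundary of positivity (a discriminant/resolvent condition) of the optimized quadratic form. Once the sign is arranged, the left side is a sum of integrals of squares while the right side is $\leqslant 0$, forcing every square to vanish: in particular $T\equiv 0$ and the remaining weighted terms vanish identically. In the strict interior of the region, or whenever $(\lambda_1,\lambda_2)\neq(2,6)$, these vanishing conditions force $\nabla^2 u$ to be pure trace with $\nabla u\equiv 0$, hence $u$ constant; substituting into \eqref{eq:Liouville} with $\lambda_2>0$ gives $\mathrm{e}^{4u}=1$, i.e. $u\equiv 0$.

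The rigidity case needs separate treatment. When $\lambda_1=2$ and $\lambda_2=6$ the form degenerates to $Q\equiv 0$ and $T\equiv 0$ becomes an Obata-type overdetermined equation $\nabla^2 u = \phi\,g$ for a scalar $\phi$ built from $u$; simultaneously the Ricci inequality must saturate to $\mathrm{Ric}=3\kappa g$ on the support of $|\nabla u|^2$. By Obata's theorem this forces $(M^4,g)$ to be isometric to the round sphere $\big(\mathbb{S}^4(1/\sqrt{\kappa}),g_c\big)$, and solving the resulting ODE along geodesics (equivalently, recognizing $\mathrm{e}^{-u}$ as an affine function of $\langle a,x\rangle_{\mathbb R^5}$) produces the explicit one-parameter family \eqref{sol:Liouville}.

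The main analytic obstacle, and the step I expect to be hardest, is the a priori \emph{second-order derivative estimate} on which the rigorous sign analysis rests: it guarantees that the auxiliary weights stay positive and that the pointwise quadratic form is evaluated within its region of definiteness. Because \eqref{eq:Liouville} is fourth-order, no direct maximum principle bounds $\Delta u$ or $\sup|u|$, so I would obtain the estimate by the continuity method, deforming $\lambda_2$ from a value at which $u\equiv 0$ is the unique solution up to the target. Openness of the admissible set follows by linearizing \eqref{eq:Liouville} — the linearization at a solution $u$ is $\phi\mapsto\Delta^2\phi-\lambda_1\kappa\Delta\phi-4\lambda_2\kappa^2 \mathrm{e}^{4u}\phi$ — and inverting it; the Lichnerowicz bound $\mu_1\geqslant 4\kappa$ on the first nonzero eigenvalue of $-\Delta$, forced by $\mathrm{Ric}\geqslant 3\kappa g$, keeps this operator invertible throughout $0<\lambda_2\leqslant L_1(\lambda_1)$. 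Closedness is the uniform $C^2$ estimate itself, bootstrapped to $C^4$ by Schauder theory. Fixing the tensor coefficients so that $Q$ reverses sign exactly along $\lambda_2=L_1(\lambda_1)$, and closing the continuity argument uniformly over $-2<\lambda_1\leqslant 2$, are where the bulk of the technical effort will lie.
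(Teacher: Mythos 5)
Your outline is not the route taken in \cite{Vét24,LW25} (whose proofs are conformal-geometric, via the Paneitz operator and the Obata--Vétois argument); the paper itself never proves Theorem \ref{thm:Vetois} but quotes it, and what you sketch is instead the paper's own machinery for its stronger Theorem \ref{thm:Liouville}. That is a legitimate plan: at \(\lambda_1=2\) one has \(L_1(2)=6\), and the exceptional case of Theorem \ref{thm:Liouville} (isometry with the round sphere) is ruled out by the hypothesis that \(M\) is not conformally diffeomorphic to \(\mathbb S^4\), so the invariant-tensor identity \eqref{id:Liouville} plus sign analysis does yield the statement for \(0<\lambda_2\leqslant 6\), even without the Einstein assumption. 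However, your execution has a genuine gap exactly at the step you flag as hardest.

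The gap is the second-order estimate. You propose a continuity method that deforms the parameter \(\lambda_2\), with openness from invertibility of the linearization \(\phi\mapsto\Delta^2\phi-\lambda_1\kappa\Delta\phi-4\lambda_2\kappa^2\mathrm e^{4u}\phi\) and ``closedness = the uniform \(C^2\) estimate itself.'' This is circular: closedness is precisely the estimate being sought, and no independent source for it exists (there is no sup bound on \(u\) for a fourth-order equation with which to start a Schauder bootstrap). It is also structurally the wrong tool: parameter continuation produces or continues solutions; it cannot output a pointwise differential inequality valid for an \emph{arbitrary given} solution of the target equation, which is what the sign analysis needs --- namely \(\Delta u+\frac{3(6-\lambda_1)(2+\lambda_1)}{8\lambda_2}|\nabla u|^2-\frac{4\lambda_2\kappa}{3(2+\lambda_1)}\leqslant 0\) (Proposition \ref{prop:est-Liouville}), used to give a sign to the coefficient \(-\frac 2 3[|\nabla u|^2+7\Delta u-2(8+\lambda_1)\kappa]\) of \(E_{ij}E^{ij}\) in \eqref{id:Liouville}. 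Moreover your non-degeneracy claim is false at the endpoint: the Lichnerowicz bound \(\mu_1\geqslant 4\kappa\) is exactly borderline, since at \(u=0\), \(\kappa=1\), \(\lambda_1=2\), \(\lambda_2=6\) the operator \(\Delta^2-2\Delta-24\) annihilates the first spherical harmonics on \(\mathbb S^4\) (\(\mu=4\) gives \(16+8-24=0\)); so the bound cannot ``keep the operator invertible'' along the whole path. The paper's continuity method is a different device: it homotopes the \emph{solution}, not the parameter. For \(u_t=tu+(1-t)\) one has \(\Delta^2u_t-\lambda_1\kappa\Delta u_t+\lambda_2\kappa^2=\lambda_2\kappa^2[t\mathrm e^{4u}+(1-t)]>0\), so every \(u_t\) satisfies the differential inequality \eqref{ineq}; Lemma \ref{lem:est} shows \(\Phi_t=\mathrm e^{a_3u_t}(\Delta u_t+a_1|\nabla u_t|^2-a_2\kappa)\) obeys \(\mathscr L_t\Phi_t\geqslant 0\) for an elliptic operator with a \(K\Phi^2\) term, \(\Phi_0<0\), \(\Phi_t\not\equiv0\) by the divergence theorem, and the strong maximum principle (Lemmas \ref{lem:strong-extreme}, \ref{lem:continuity}) propagates strict negativity from \(t=0\) to \(t=1\). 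So, contrary to your remark that no maximum principle applies, the estimate \emph{is} a maximum-principle statement; without it (or a replacement) your ``\(Q\leqslant0\) forces every square to vanish'' step cannot be run. Finally, note that Theorem \ref{thm:Vetois} as stated allows \(\lambda_2\leqslant 0\), which your scheme never addresses: for \(\lambda_2<0\) multiply \eqref{eq:Liouville} by \(u\) and integrate to get \(\int_M(\Delta u)^2+\lambda_1\kappa\int_M|\nabla u|^2=\lambda_2\kappa^2\int_M u(\mathrm e^{4u}-1)\leqslant 0\), forcing \(u\equiv0\); while \(\lambda_2=0\) is degenerate (every constant solves the equation), so that case must be read as excluded.
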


\begin{theorem}[\cite{Vét24,LW25}]
    Suppose that \((M^n,g)\) is a smooth, closed, Einstein Riemannian manifold with \(\mathrm{Ric}\geqslant(n-1)g\). If \(\kappa=1\), \(1<\alpha\leqslant\frac{n+4}{n-4}\), \(\lambda_1=\frac{n^2-2n-4}{2}\), \(\lambda_2\leqslant\frac{n(n-2)(n+2)(n-4)}{16}\), and \(M\) is not conformally diffeomorphic to \(\mathbb S^4\) when \(\alpha=\frac{n+4}{n-4}\) and \(\lambda_2=\frac{n(n-2)(n+2)(n-4)}{16}\). Then any positive \(C^4\) solution to \eqref{eq:subcritical} must be 1.
\end{theorem}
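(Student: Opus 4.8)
The plan is to prove the rigidity by the integrated Bochner, or \emph{invariant tensor}, method, adapting Bidaut-Véron and Véron's treatment of the second-order equation to the fourth-order operator. Since \((M^n,g)\) is Einstein with \(\mathrm{Ric}\geqslant(n-1)g\), the background Ricci tensor is parallel and \(\mathrm{Ric}=cg\) for a constant \(c\geqslant n-1\); with \(\kappa=1\) this makes \(\lambda_1=\frac{n^2-2n-4}{2}\) and \(\lambda_2=\frac{n(n-2)(n+2)(n-4)}{16}\) precisely the coefficients of the Paneitz operator, so that \eqref{eq:subcritical} is the constant \(\mathrm Q\)-curvature equation on the Einstein background. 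I would begin by recording the two elementary consequences of integrating the equation: since \(\int_M\Delta^2u=\int_M\Delta u=0\) on a closed manifold, one gets \(\int_M u=\int_M u^\alpha\), and then, because \(\alpha>1\) and \(\mathrm{vol}(M)=1\), Jensen's inequality yields \(\int_M u\leqslant1\) with equality if and only if \(u\) is constant — in which case the equation forces \(u\equiv1\). Thus it suffices to rule out nonconstant positive solutions.

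\textbf{A priori estimates.} Testing \eqref{eq:subcritical} against \(u\) controls \(\int_M(\Delta u)^2\), \(\int_M|\nabla u|^2\) and \(\int_M u^{\alpha+1}\) in terms of \(\int_M u\); together with \(L^p\) elliptic theory for the positive operator \(\Delta^2-\lambda_1\Delta\) and a Moser/Harnack iteration one bootstraps to a uniform \(C^0\) bound and a positive lower bound \(u\geqslant\delta>0\), hence to \(C^4\) bounds by Schauder. The delicate point — and the step I expect to be the \emph{main obstacle} — is the uniform second-order estimate \(\sup_M|\nabla^2u|\leqslant C\). I would obtain this by the continuity method: deform the pair \((\alpha,\lambda_2)\) along a path starting from a value where the estimate is immediate and propagate the bound, the openness coming from linearized invertibility and the closedness from a blow-up argument that excludes concentration, where strict subcriticality \(\alpha<\frac{n+4}{n-4}\) rules out bubbling. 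In the critical case \(\alpha=\frac{n+4}{n-4}\) this compactness can genuinely fail, and that failure is exactly what admits the sphere.

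\textbf{The differential identity.} With the estimates secured, the heart of the argument is a pointwise identity produced by the invariant-tensor method. I would introduce a power substitution \(v=u^\beta\), with \(\beta\) chosen to normalize the conformal weight, and form the trace-free Hessian
\[ E_{ij}=\nabla_i\nabla_j v-\frac{\Delta v}{n}g_{ij} \]
together with the third-order tensor arising from \(\nabla_k E_{ij}\). Contracting \eqref{eq:subcritical} against a suitable combination of \(v\), \(\Delta v\) and \(E_{ij}\), integrating by parts repeatedly, and using the Bochner–Lichnerowicz formula to turn every curvature term into \(\mathrm{Ric}(\nabla v,\nabla v)\geqslant(n-1)|\nabla v|^2\), the accumulated identity can be arranged in the schematic form
\[ \int_M\Big(a\,|\nabla_k E_{ij}|^2+b\,|E_{ij}|^2+(\text{nonnegative curvature defect})\Big)\,\mathrm{dvol}_g\leqslant0. \]
The entire algebraic content of the hypotheses enters here: the value \(\lambda_1=\frac{n^2-2n-4}{2}\) is the unique one for which the cross terms assemble into perfect squares, and \(\lambda_2\leqslant\frac{n(n-2)(n+2)(n-4)}{16}\) is exactly the condition making the coefficients \(a,b\) nonnegative.

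\textbf{Conclusion and rigidity.} Because the integrand is a sum of nonnegative multiples of squares and its integral is nonpositive, every term vanishes identically. In particular \(E_{ij}\equiv0\), so \(v\) satisfies \(\nabla^2v=\frac{\Delta v}{n}g\); feeding this back into the equation with \(\mathrm{Ric}=cg\) forces \(v\), hence \(u\), to be constant, and by the first paragraph \(u\equiv1\). The only alternative is that all the intermediate inequalities are equalities, namely \(\mathrm{Ric}(\nabla v,\nabla v)=(n-1)|\nabla v|^2\) and \(\nabla^2v=\frac{\Delta v}{n}g\) with \(v\) nonconstant. By Obata's theorem these force \((M^n,g)\) to be the round sphere and determine the explicit family of solutions, which can occur only in the excepted critical case \(\alpha=\frac{n+4}{n-4}\), \(\lambda_2=\frac{n(n-2)(n+2)(n-4)}{16}\), with \(M\) conformally the sphere. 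This completes the plan.
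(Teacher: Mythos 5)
First, a framing point: the paper does not prove this statement at all --- it is quoted as background from \cite{Vét24,LW25}, and the paper's own remark (``Unlike backgrounds from conformal geometry in \cite{Vét24,LW25}, we establish differential identities using the invariant tensor technique'') indicates that the cited proofs run through conformal geometry: on an Einstein manifold one uses the conformal covariance of the Paneitz operator, passes to the conformal metric \(\hat g=u^{4/(n-4)}g\), and closes with an Obata-type argument, with no a priori derivative estimates and no deformation argument. Measured on its own terms, your proposal has a conceptual error in the estimates step. You propose a continuity method that deforms the \emph{parameters} \((\alpha,\lambda_2)\) along a path, with openness from linearized invertibility and closedness from a blow-up/compactness argument. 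That scheme is the one used to prove \emph{existence} or compactness of solution sets; it cannot produce a pointwise bound for the single given solution \(u\), because \(u\) does not move with the parameters --- there is no family of solutions through \(u\) to follow. Contrast Section \ref{sec:estimate} of this paper, where the homotopy is \(u_t=tu+(1-t)\), a deformation of the solution itself; the quantity \(\Phi_t\) satisfies \(\mathscr L_t\Phi_t\geqslant0\), and the estimate propagates via the strong maximum principle (Lemmas \ref{lem:strong-extreme} and \ref{lem:continuity}), with no compactness anywhere. Your closedness step (``subcriticality rules out bubbling'') is also unusable here, since the theorem you must prove includes the critical exponent \(\alpha=\frac{n+4}{n-4}\); in all known proofs the critical case is handled by the equality analysis in an identity, not by discussing failure of compactness.

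The second, and decisive, gap is that the heart of your argument --- the differential identity --- is asserted rather than derived. You state that contracting \eqref{eq:subcritical} against ``a suitable combination'' of \(v\), \(\Delta v\), \(E_{ij}\) and integrating by parts yields \(\int_M\big(a|\nabla_kE_{ij}|^2+b|E_{ij}|^2+\cdots\big)\leqslant0\) with \(a,b\geqslant0\) precisely under the stated hypotheses on \(\lambda_1,\lambda_2\). Producing that combination and verifying the sign of every coefficient \emph{is} the theorem: in this paper's fourth-order analogue the corresponding work occupies all of Section \ref{sec:Liouville} (the choice \(b=-1\), the tensors \(E_{ij},F_i,G\), the seven auxiliary identities, and the sign analysis through the polynomial \(Q_1\) and Definition \ref{def:Liouville}), and none of it is routine. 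A schematic form with unspecified coefficients carries no content, and your claim that \(\lambda_1=\frac{n^2-2n-4}{2}\) is ``the unique value for which the cross terms assemble into perfect squares'' is exactly what would need to be computed. Additionally, your preliminary regularity claims (Moser/Harnack iteration, a positive lower bound \(u\geqslant\delta\)) cannot be invoked as freely as you do for \(\Delta^2-\lambda_1\Delta\), since fourth-order operators admit no maximum principle; the conformal-geometric proofs avoid needing such bounds altogether. Your opening reduction via integration and Jensen's inequality (\(\int_M u=\int_M u^{\alpha}\), hence \(u\) constant forces \(u\equiv1\)) is correct, but everything after it is a plan whose two essential steps are missing or misdirected.
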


These two classification results showed that for any smooth, closed Einstein manifold with positive scalar curvature and not conformally diffeomorphic to the standard sphere, the only conformal metrics to \(g\) with constant Q-curvature must be multiples of \(g\). Besides, Case \cite{Cas24} considered the mixed curvature \(\mathrm{Q}+a\sigma_2\) instead of Q-curvature, and gave a related classification result.

Hence, in Theorem \ref{thm:Liouville}, we remove the assumption of Einstein in Theorem \ref{thm:Vetois}, obtain the rigidity of manifolds when \(\lambda_1=2\) and \(\lambda_2=6\), and extend the range of \(\lambda_1\) and \(\lambda_2\). Unlike backgrounds from conformal geometry in \cite{Vét24,LW25}, we establish differential identities using the invariant tensor technique. This technique was introduced by Ma-Wu \cite{MW24} to study second-order semi-linear equations on manifolds. After which, Ma-Ou-Wu \cite{MOW25} reconstructed Jerison-Lee's identity via the invariant tensor technique and obtained a Liouville theorem for the subcritical equation on CR manifolds. Recently, Ma-Wu-Wu \cite{MWW25} extended Lin's result to non-compact Riemannian manifolds with the help of the invariant tensor technique.


In local coordinates, we denote the metric tensor as \(g_{ij}\), and the Ricci curvature tensor as \(R_{ij}\). Denote the inner product w.r.t. \(g\) as \(\langle\cdot,\cdot\rangle\). We employ the Einstein summation convention to raise and lower the indices using the metric tensor \(g_{ij}\) and its inverse \(g^{ij}\). All covariant derivatives are denoted with a comma, such as \(_{,i}\), except for acting on our solution \(u\).

In Section \ref{sec:estimate}, we establish the prior estimate of second derivatives via the maximum principle and the continuity method, where we improve the estimate in \cite{Vét24}. In Section \ref{sec:Liouville}, the differential identity is obtained by the invariant tensor technique, and its positivity is ensured by a subtle choice of parameters. Thus, Theorem \ref{thm:Liouville} is verified by the identity. In Section \ref{sec:Onofri}, as an application of Theorem \ref{thm:Liouville}, we obtain a rigidity result, Theorem \ref{thm:Onofri}, of an Onofri-type inequality.
\section{An estimate of the second order derivative}\label{sec:estimate}

In this section, \(\lambda_1\in\mathbb R\), \(f\in C^2(\mathbb R)\). We study the differential inequality
\begin{equation}\label{ineq}
    {\Delta ^2}u-\lambda_1\kappa\Delta u+f(u)\kappa^2\geqslant0.
\end{equation}
We first state the strong maximum principle that will be needed.
\begin{lemma}\label{lem:strong-extreme}
    Let \((M^n,g)\) be a Riemannian manifold, \(V\in \Gamma(TM)\), \(q\in C^2(M)\), \(q\leqslant 0\). Define the second-order elliptic linear operator of the form
    \[\mathscr Lv:=\Delta v+\langle V,\nabla v\rangle+qv,\quad \forall v \in C^2(M).\]
    For \(v\in C^2 (\operatorname{Int}M)\cap C^{0}(M)\), if \(\mathscr Lv\geqslant 0\) in \(\operatorname{Int}M\), and \(v\) attains a nonnegative maximum value in \(\operatorname{Int}M\), then \(v\) must be constant on \(M\). 
\end{lemma}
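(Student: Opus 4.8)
The plan is to reduce the statement to Hopf's classical strong maximum principle and to run the barrier argument directly, so that the desired conclusion follows from a single interior-ball construction together with the sign hypotheses. Throughout, the point is that the hypotheses $q\leqslant 0$ and ``nonnegative maximum'' are used in precisely coordinated ways.

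First I would localize and verify uniform ellipticity. In any coordinate chart the Laplace--Beltrami operator reads $\Delta=g^{ij}\partial_i\partial_j-g^{ij}\Gamma_{ij}^k\partial_k$, so $\mathscr L$ takes the form $a^{ij}\partial_i\partial_j+b^i\partial_i+q$ with $a^{ij}=g^{ij}$ positive definite and all coefficients continuous, hence bounded and uniformly elliptic on any relatively compact subchart; crucially the zeroth-order coefficient is the same $q\leqslant 0$. Thus on each small geodesic ball contained in $\operatorname{Int}M$, $\mathscr L$ is a uniformly elliptic operator with $q\leqslant 0$, to which the Euclidean theory applies verbatim. Set $m=\max_M v\geqslant 0$, attained at an interior point, and $\Omega=\{x\in\operatorname{Int}M:v(x)<m\}$. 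Suppose for contradiction $\Omega\neq\varnothing$. Since $\{v=m\}$ meets $\operatorname{Int}M$, I choose (in a chart) a point $p\in\Omega$ with $\operatorname{dist}(p,\{v=m\})<\operatorname{dist}(p,\partial M)$ and enlarge the ball centered at $p$ until $B:=B_R(p)\subset\Omega$ and $\partial B$ touches $\{v=m\}$ at a single point $x_0\in\operatorname{Int}M$; this is the standard interior-ball construction.

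Next I would build a barrier on the annulus $A=B_R(p)\setminus\overline{B_{R/2}(p)}$. Take $w=\mathrm e^{-\alpha r^2}-\mathrm e^{-\alpha R^2}$ with $r=|x-p|$. A direct computation gives
\[
\mathscr Lw=\mathrm e^{-\alpha r^2}\Big[4\alpha^2 a^{ij}(x-p)_i(x-p)_j-2\alpha\big(a^{ii}+b^i(x-p)_i\big)+q\Big]-q\,\mathrm e^{-\alpha R^2}.
\]
On $A$ uniform ellipticity bounds $a^{ij}(x-p)_i(x-p)_j\geqslant\theta r^2\geqslant\theta R^2/4$, so the bracket is dominated by $\alpha^2\theta R^2$ for large $\alpha$, while $-q\,\mathrm e^{-\alpha R^2}\geqslant 0$ because $q\leqslant 0$; hence $\mathscr Lw>0$ on $A$ once $\alpha$ is fixed large. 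Now compare using $u:=v-m\leqslant 0$. By linearity $\mathscr L(u+\varepsilon w)=\mathscr Lv-qm+\varepsilon\mathscr Lw$, and here is where the hypotheses combine: $\mathscr Lv\geqslant 0$, $-qm\geqslant 0$ (since $q\leqslant 0$ and $m\geqslant 0$), and $\varepsilon\mathscr Lw>0$, so $\mathscr L(u+\varepsilon w)>0$ on $A$. On $\{r=R/2\}$ one has $u<0$ compactly, so $u+\varepsilon w<0$ for $\varepsilon$ small; on $\{r=R\}$, $w=0$ gives $u+\varepsilon w=u\leqslant 0$. The weak maximum principle then yields $u+\varepsilon w\leqslant 0$ on $A$, with equality at $x_0$ (where $u(x_0)=0=w(x_0)$). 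Consequently the outward normal derivative satisfies $\partial_\nu(u+\varepsilon w)(x_0)\geqslant 0$, and since $\partial_\nu w(x_0)=-2\alpha R\,\mathrm e^{-\alpha R^2}<0$ we get $\partial_\nu v(x_0)\geqslant-\varepsilon\partial_\nu w(x_0)>0$. But $x_0$ is an interior maximum of $v$, so $\nabla v(x_0)=0$ and $\partial_\nu v(x_0)=0$, a contradiction.

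Therefore $\Omega=\varnothing$, i.e. $v\equiv m$ on $\operatorname{Int}M$, and by continuity $v\equiv m$ on $M$. I expect the main obstacle to be the barrier estimate together with the exact bookkeeping of signs: the conditions $q\leqslant 0$ and $m\geqslant 0$ must be invoked precisely so that the zeroth-order term never spoils either $\mathscr L(u+\varepsilon w)>0$ or the positivity $-q\,\mathrm e^{-\alpha R^2}\geqslant 0$ inside $\mathscr Lw$. The only genuinely Riemannian point is confirming that the chart coefficients $a^{ij}=g^{ij}$ remain uniformly elliptic with bounded $b^i,q$ on the relatively compact set where the argument takes place, which is automatic since $g$ is smooth.
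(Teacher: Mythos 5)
Your proof is correct in substance, but note that the paper does not prove this lemma at all: it is stated as the classical strong maximum principle of Hopf (for operators with zeroth-order coefficient \(q\leqslant 0\) and a nonnegative interior maximum) and then used as a black box in the proof of Lemma \ref{lem:continuity}. So what you have done is reconstruct the standard textbook argument (interior ball plus exponential barrier, as in Gilbarg--Trudinger, Theorem 3.5 via Lemma 3.4), transplanted to a coordinate chart. The sign bookkeeping --- \(\mathscr L(v-m)=\mathscr Lv-qm\geqslant 0\) because \(q\leqslant 0\) and \(m\geqslant 0\), and \(-q\,\mathrm e^{-\alpha R^2}\geqslant 0\) inside the barrier computation --- is exactly the right way to see why both sign hypotheses are needed, and the Hopf-lemma contradiction with \(\nabla v(x_0)=0\) at an interior maximum is sound.

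Two points deserve more care. First, connectedness of \(M\) is essential (with \(q\equiv 0\) and \(v\) equal to different constants on different components, all hypotheses hold but the conclusion fails); it is a standing assumption of the paper (Section \ref{sec:intro}), and your argument uses it silently when you assert the existence of \(p\in\Omega\) with \(\operatorname{dist}(p,\{v=m\})<\operatorname{dist}(p,\partial M)\) --- that existence is precisely where one needs \(\partial\Omega\cap\operatorname{Int}M\neq\varnothing\), which fails without connectedness. Second, your interior-ball construction is phrased globally (enlarge \(B_R(p)\) until it touches \(\{v=m\}\)) while the barrier computation is a chart computation; the ball produced by the global construction need not lie in any single chart, so strictly speaking you should localize first: pick \(y_0\in\partial\Omega\cap\operatorname{Int}M\), work inside one chart around \(y_0\) with Euclidean balls, and choose \(p\in\Omega\) so close to \(y_0\) that the maximal chart-ball centered at \(p\) contained in \(\Omega\) stays inside the chart. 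Equivalently, and more cleanly on a manifold, use your barrier to prove the purely local statement that \(\{v=m\}\cap\operatorname{Int}M\) is open, observe that it is closed and nonempty, and conclude by connectedness. With these repairs your argument is a complete, self-contained proof of the lemma, which the paper itself leaves to the literature.
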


The following lemma introduces the continuity method.
\begin{lemma}\label{lem:continuity}
    Let \((M^n,g)\) be a closed Riemannian manifold, \(v_t,K_t\in C^2(M)\), \(V_t\in\Gamma(TM)\), \(K_t\geqslant0\) and \(v_t\) is continuous w.r.t. \(t\in[0,1]\). The operator \(\mathscr L_t\) is defined as
    \[\mathscr L_tv=\Delta v+\langle V_t,\nabla v\rangle+K_tv^2,\quad \forall v \in C^2(M).\]
    If \(v_0<0\), \(\mathscr L_tv_t\geqslant 0\) and \(v_t\not\equiv0\) for all \(t\in[0,1]\), then \(v_1<0\).
\end{lemma}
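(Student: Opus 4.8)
The plan is to prove Lemma~\ref{lem:continuity} by a standard continuity (connectedness) argument, using Lemma~\ref{lem:strong-extreme} as the engine at each parameter value. Define the set
\[
T:=\{\,t\in[0,1]:v_t<0\text{ on }M\,\}.
\]
Since $v_0<0$ by hypothesis, we have $0\in T$, so $T$ is nonempty. The goal is to show $T=[0,1]$, which yields $v_1<0$. Because $[0,1]$ is connected, it suffices to prove that $T$ is both open and closed in $[0,1]$; this is where the two structural ingredients enter.

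\medskip

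First I would establish \emph{openness}. Fix $t_0\in T$, so $v_{t_0}<0$ on the compact manifold $M$, hence $\max_M v_{t_0}=-\delta<0$ for some $\delta>0$. Since $t\mapsto v_t$ is continuous into $C^2(M)$ (or at least into $C^0(M)$, which is all that is needed here), the map $t\mapsto\max_M v_t$ is continuous, so for $t$ near $t_0$ we still have $\max_M v_t<0$, i.e.\ $v_t<0$. This shows $T$ is open in $[0,1]$, and this step is routine.

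\medskip

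The crux is \emph{closedness}, and this is where the maximum principle does the real work. Suppose $t_k\in T$ with $t_k\to t_*$; I must show $t_*\in T$. By continuity, $v_{t_*}\leqslant 0$ on $M$ (as a limit of nonpositive functions). The danger is that $v_{t_*}$ could touch $0$ somewhere. To rule this out, I would rewrite the hypothesis $\mathscr L_{t_*}v_{t_*}\geqslant 0$, namely
\[
\Delta v_{t_*}+\langle V_{t_*},\nabla v_{t_*}\rangle+K_{t_*}v_{t_*}^2\geqslant 0,
\]
in the linear form required by Lemma~\ref{lem:strong-extreme} by absorbing the quadratic term: set $q:=K_{t_*}v_{t_*}$, so that $K_{t_*}v_{t_*}^2=q\,v_{t_*}$ and the inequality becomes $\Delta v_{t_*}+\langle V_{t_*},\nabla v_{t_*}\rangle+q\,v_{t_*}\geqslant 0$. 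Since $K_{t_*}\geqslant 0$ and $v_{t_*}\leqslant 0$, we have $q=K_{t_*}v_{t_*}\leqslant 0$, so $q$ satisfies the sign condition of Lemma~\ref{lem:strong-extreme}. Now if $v_{t_*}$ attains the value $0$ at an interior point, that is a nonnegative maximum (since $v_{t_*}\leqslant 0$), so by the strong maximum principle $v_{t_*}\equiv 0$ on $M$, contradicting the hypothesis $v_t\not\equiv 0$ for all $t\in[0,1]$. Hence $v_{t_*}<0$ everywhere, giving $t_*\in T$ and closedness of $T$.

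\medskip

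The main obstacle I anticipate is the bookkeeping in the closedness step: one must verify that the coefficient $q=K_{t_*}v_{t_*}$ is genuinely $C^2$ (or at least regular enough to invoke the strong maximum principle as stated), which follows from $K_{t_*},v_{t_*}\in C^2(M)$, and one must be careful that the linearization $K_{t_*}v_{t_*}^2=q\,v_{t_*}$ with the specific choice $q=K_{t_*}v_{t_*}$ is what makes the sign $q\leqslant 0$ hold precisely because $v_{t_*}\leqslant 0$ has already been secured by the limiting argument. Everything else is the standard ``open and closed in a connected interval'' template; the real content is packaging the quadratic nonlinearity into a linear operator with the correct sign so that Lemma~\ref{lem:strong-extreme} applies.
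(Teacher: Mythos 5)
Your proof is correct and is essentially the paper's argument: the paper runs the same continuity method as a direct contradiction (if $\sup_M v_1\geqslant 0$, continuity of $t\mapsto\sup_M v_t$ produces a $t_0$ with $\sup_M v_{t_0}=0$, and the strong maximum principle forces $v_{t_0}\equiv 0$, contradicting $v_t\not\equiv 0$), which is just the compressed form of your open-closed formulation. In particular, your key step of linearizing $K_{t}v_{t}^2=(K_{t}v_{t})\,v_{t}$ with $q=K_{t}v_{t}\leqslant 0$ once $v_{t}\leqslant 0$ is secured is exactly what the paper's appeal to Lemma~\ref{lem:strong-extreme} implicitly requires, and you have verified it carefully.
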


\begin{proof}
    Assume \(\sup\limits_M v_1\geqslant0\). By the continuity of \(v_t\) w.r.t. \(t\), \(\exists t_0 \in (0,1]\) s.t. \(\sup\limits_M v_{t_0}=0\). By Lemma \ref{lem:strong-extreme} and the compactness of \(M\), it implies \(v_{t_0}\equiv 0\), a contradiction.
\end{proof}

\begin{lemma}\label{lem:est}
    Let \(h_1,h_2,h_3\in C^2(\mathbb R;\mathbb R_+)\), \(\Phi:=h_3(u)[\Delta u+h_1(u)|\nabla u|^2-h_2(u)]\), where \(u\) is a \(C^4\) solution to \eqref{ineq}, then
    \[\frac{1}{h_3}\mathscr L\Phi
    \geqslant~h_1\Big(\frac{\Phi}{h_3},h_1|\nabla u|^2,h_2\Big)A\Big(\frac{\Phi}{h_3},h_1|\nabla u|^2,h_2\Big)^T,\]
    where \(\mathscr L\Phi:=\Delta\Phi-2\Big(h_1+\frac{h_3'}{h_3}\Big)\langle\nabla u,\nabla\Phi\rangle+K\frac{h_1}{h_3}\Phi^2\), and \(A=(A_{ij})\in\mathbb R^{3\times3}\) satisfies
    \[A_{11}=K+\frac 2 n+\frac{h_3'}{h_1h_3},\quad A_{12}=-\frac 4 n-\frac{n+4}{2n}\Big(\frac{1}{h_1}\Big)'-\frac{3h_3'}{2h_1h_3}-\frac{h_3}{2h_1^2}\Big(\frac{1}{h_3}\Big)'',\]
    \[A_{13}=\frac 2 n+\frac{\lambda_1\kappa-h_2'}{2h_1h_2}+\frac{h_3'}{2h_1h_3},\quad A_{22}=-\frac{2(n-4)}{n}-\frac{n-8}{n}(\frac{1}{h_1})'-\frac{n}{n-2}h_1^{-\frac{n+2}{n}}(h_1^{-\frac{n-2}{n}})'',\]
    \[A_{23}=-\frac 4 n-\frac{n+4}{2n}(\frac{1}{h_1})'+\frac{(2n-2-\lambda_1)\kappa+3h_2'}{2h_1h_2}-\frac{h_2''}{2h_1^2h_2},\quad A_{33}=\frac 2 n+\frac{\lambda_1\kappa-h_2'}{h_1h_2}-\frac{f(u)\kappa^2}{h_1h_2^2}.\]
    Here we omit the dependence of \(h_1,h_2,h_3\) on \((u)\) for convenience.
\end{lemma}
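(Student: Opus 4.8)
The plan is to prove the inequality by a direct Bochner-type computation, organizing the outcome through a single trace-free \emph{invariant tensor} whose squared norm absorbs all genuinely second-order terms. Throughout I write $P:=|\nabla u|^2$, $G:=\langle\nabla u,\nabla\Delta u\rangle$, $H:=u_{ij}u^iu^j$, and set $w:=\Delta u+h_1P-h_2$, so that $\Phi=h_3w$ and $\Phi/h_3=w$ is exactly the first entry of the vector appearing on the right-hand side; the three scalars playing the role of the variables are $x_1=\Phi/h_3=w$, $x_2=h_1P$, $x_3=h_2$.

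First I would compute $\nabla\Phi$ and $\Delta\Phi$ by the product rule, using $\nabla P=2\,u_{ij}u^j$ and the Bochner formula $\Delta P=2|\nabla^2u|^2+2G+2\,\mathrm{Ric}(\nabla u,\nabla u)$. This expresses $\Delta\Phi$ in terms of $\Delta^2u$, $|\nabla^2u|^2$, $G$, $H$, $\mathrm{Ric}(\nabla u,\nabla u)$, and lower-degree polynomials in $\Delta u$, $P$, $h_2$. Assembling $\mathscr L\Phi=\Delta\Phi-2(h_1+h_3'/h_3)\langle\nabla u,\nabla\Phi\rangle+K(h_1/h_3)\Phi^2$, the decisive point is that the drift coefficient $-2(h_1+h_3'/h_3)$ is tuned precisely so that all the $G$-terms cancel identically; the only surviving third-order interaction is the Hessian--gradient term, whose coefficient collapses to $4h_3(h_1'-h_1^2)H$, while the coefficient of $|\nabla^2u|^2$ is $2h_1h_3$ and that of $\mathrm{Ric}(\nabla u,\nabla u)$ is $2h_1h_3$.

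Since the coefficient of $\Delta^2u$ is $h_3>0$ and that of $\mathrm{Ric}(\nabla u,\nabla u)$ is $2h_1h_3>0$, I may bound $\mathscr L\Phi$ from below using \eqref{ineq} in the form $\Delta^2u\geqslant\lambda_1\kappa\Delta u-f(u)\kappa^2$ together with $\mathrm{Ric}\geqslant(n-1)\kappa g$. The heart of the argument is then the invariant tensor: I complete the square on the trace-free tensor $\mathring E_{ij}:=u_{ij}+\alpha\,u_iu_j-\tfrac1n(\Delta u+\alpha P)g_{ij}$, choosing the single multiplier $\alpha:=h_1'/h_1-h_1$ \emph{exactly} so that the residual $H$-term is cancelled (indeed $2h_1\cdot 2\alpha=4(h_1'-h_1^2)$). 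From $|\mathring E|^2\geqslant0$, i.e. $|\nabla^2u|^2\geqslant\frac1n(\Delta u)^2+\frac{2\alpha}{n}\Delta u\,P-\frac{(n-1)\alpha^2}{n}P^2-2\alpha H$, the combination $2h_1h_3|\nabla^2u|^2+4h_3(h_1'-h_1^2)H$ reduces to a polynomial purely in $\Delta u$, $P$, and $h_2$; this is the one step that is genuinely conceptual rather than mechanical.

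Finally, at this stage $\frac1{h_3}\mathscr L\Phi$ is bounded below by a quadratic polynomial in $\Delta u$ and $P$ with $C^2(\mathbb R)$ coefficients. Substituting $\Delta u=x_1-x_2+x_3$ and $P=x_2/h_1$, and homogenizing the lower-degree terms by inserting the appropriate powers of $x_3/h_2=1$, turns the bound into $h_1\,\vec xA\vec x^{\,T}$ with $\vec x=(x_1,x_2,x_3)$; reading off $A_{ii}$ as the coefficient of $x_i^2$ divided by $h_1$ and $A_{ij}$ ($i<j$) as the coefficient of $x_ix_j$ divided by $2h_1$ produces the nine entries. I expect the main obstacle to be precisely this final bookkeeping: verifying that the $h_1''$ and $(h_1')^2$ contributions assemble into $-\frac{n}{n-2}h_1^{-(n+2)/n}(h_1^{-(n-2)/n})''$ in $A_{22}$, and the $h_3''$ and $(h_3')^2$ contributions into $-\frac{h_3}{2h_1^2}(1/h_3)''$ in $A_{12}$. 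Recognizing these total-derivative packagings, and locking down the single multiplier $\alpha=h_1'/h_1-h_1$, is the essential content of the invariant-tensor method; the Ricci bound then supplies the $(2n-2)\kappa$ in $A_{23}$ and the equation supplies the $\lambda_1\kappa$ and $f(u)\kappa^2$ terms in $A_{13},A_{23},A_{33}$.
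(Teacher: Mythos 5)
Your proposal is correct and follows essentially the same route as the paper's proof: the drift term $-2\bigl(h_1+\tfrac{h_3'}{h_3}\bigr)\langle\nabla u,\nabla\Phi\rangle$ cancels the $\langle\nabla\Delta u,\nabla u\rangle$ terms, the trace-free tensor with multiplier $\alpha=\tfrac{h_1'}{h_1}-h_1$ absorbs both $|\nabla^2u|^2$ and the Hessian--gradient term (the paper writes this as the complete square $2h_1\bigl|\nabla^2u+\alpha\,\nabla u\otimes\nabla u-\tfrac1n(\Delta u+\alpha|\nabla u|^2)g\bigr|^2\geqslant0$, which is exactly your $|\mathring E|^2\geqslant0$ bound), and the Ricci lower bound together with \eqref{ineq} and the added $K\tfrac{h_1}{h_3}\Phi^2$ term yield the quadratic form in $\bigl(\tfrac{\Phi}{h_3},h_1|\nabla u|^2,h_2\bigr)$. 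The remaining work in both arguments is the same bookkeeping you identify for $A_{12}$ and $A_{22}$.
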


\begin{proof}
    By \(\Delta\nabla u=\nabla\Delta u+\mathrm{Ric}(\nabla u,\cdot)\), we obtain
    \begin{align*}
        \frac{\Delta\Phi}{h_3}=&~\Delta^2 u+2h_1|\nabla^2 u|^2+2\Big(h_1+\frac{h_3'}{h_3}\Big)\langle\nabla\Delta u,\nabla u\rangle+2h_1\mathrm{Ric}(\nabla u,\nabla u)\\
        &+4\Big(h_1'+h_1\frac{h_3'}{h_3}\Big)\langle{\nabla}^2 u,\nabla u\otimes\nabla u\rangle+h_1'|\nabla u|^2\Delta u+\Big(h_1''+2h_1'\frac{h_3'}{h_3}\Big)|\nabla u|^4\\
        &-h_2'\Delta u-\Big(h_2''+2h_2'\frac{h_3'}{h_3}\Big)|\nabla u|^2+(h_3'\Delta u+h_3''|\nabla u|^2)\frac{\Phi}{h_3^2}.
    \end{align*}
    Combining with
    \[\frac{\langle\nabla u,\nabla\Phi\rangle}{h_3}=\langle\nabla\Delta u,\nabla u\rangle+2h_1\langle{\nabla}^2 u,\nabla u\otimes\nabla u\rangle+h_1'|\nabla u|^4-h_2'|\nabla u|^2+\frac{h_3'}{h_3^2}|\nabla u|^2\Phi,\]
    the differential inequality \eqref{ineq} and the Ricci curvature condition \(\operatorname{Ric}\geqslant(n-1)\kappa g\), we have
    \begin{align*}
        &\frac{1}{h_3}\Big[\Delta\Phi-2\Big(h_1+\frac{h_3'}{h_3}\Big)\langle\nabla u,\nabla\Phi\rangle\Big]\\
        =~&2h_1[|\nabla^2 u|^2+\mathrm{Ric}(\nabla u,\nabla u)]+4(h_1'-h_1^2)\langle{\nabla}^2 u,\nabla u\otimes\nabla u\rangle+h_1'|\nabla u|^2\Delta u\\
        &+(h_1''-2h_1h_1')|\nabla u|^4-h_2'\Delta u+(2h_1h_2'-h_2'')|\nabla u|^2\\
        &+\Big\{\frac{h_3'}{h_3^2}\Delta u+\Big[\Big(\frac{h_3'}{h_3^2}\Big)'-\frac{2h_1h_3'}{h_3^2}\Big]|\nabla u|^2\Big\}\Phi+\Delta^2 u\\
        =~&2h_1\Big|\nabla^2 u+\Big(\frac{h_1'}{h_1}-h_1\Big)\nabla u\otimes\nabla u-\frac 1 n\Big[\Delta u+\Big(\frac{h_1'}{h_1}-h_1\Big)|\nabla u|^2\Big]g\Big|^2\\
        &+2h_1\mathrm{Ric}(\nabla u,\nabla u)+\frac 2 n h_1\Big[\frac{\Phi}{h_3}+\Big(\frac{h_1'}{h_1}-2h_1\Big)|\nabla u|^2+h_2\Big]^2+\frac{h_1'}{h_3}|\nabla u|^2\Phi\\
        &+\Big(h_1''-\frac{2(h_1')^2}{h_1}+h_1h_1'-2h_1^3\Big)|\nabla u|^4-\frac{h_2'}{h_3}\Phi+(h_1'h_2+3h_1h_2'-h_2'')|\nabla u|^2\\
        &-h_2h_2'+\Big\{\frac{h_3'}{h_3^3}\Phi+\Big[\Big(\frac{h_3'}{h_3^2}\Big)'-3h_1\frac{h_3'}{h_3^2}\Big]|\nabla u|^2+h_2\frac{h_3'}{h_3^2}\Big\}\Phi+\Delta^2 u\\
        \geqslant&~2(n-1)\kappa h_1|\nabla u|^2+\frac 2 n h_1\Big[\frac{\Phi}{h_3}+\Big(\frac{h_1'}{h_1}-2h_1\Big)|\nabla u|^2+h_2\Big]^2+\frac{h_1'}{h_3}|\nabla u|^2\Phi\\
        &~+\Big(h_1''-\frac{2(h_1')^2}{h_1}+h_1h_1'-2h_1^3\Big)|\nabla u|^4-\frac{h_2'}{h_3}\Phi+(h_1'h_2+3h_1h_2'-h_2'')|\nabla u|^2\\
        &~-h_2h_2'+\lambda_1\kappa\Delta u-f(u)\kappa^2+\Big\{\frac{h_3'}{h_3^3}\Phi+\Big[\Big(\frac{h_3'}{h_3^2}\Big)'-3h_1\frac{h_3'}{h_3^2}\Big]|\nabla u|^2+h_2\frac{h_3'}{h_3^2}\Big\}\Phi.
    \end{align*}
    Adding \(K\frac{h_1}{h_3^2}\Phi^2\) and rewriting as a quadratic form, we complete the proof.
\end{proof}

Thanks to Lemma \ref{lem:est}, we establish estimates involving \(\Delta u\) for equations \eqref{eq:Liouville} and \eqref{eq:subcritical}.

\begin{proposition}\label{prop:est-Liouville}
    Let \(u\) be a solution to \eqref{eq:Liouville}. If \(-2<\lambda_1<6\), \(\lambda_2>0\), then
    \[\Delta u+\frac{3(6-\lambda_1)(2+\lambda_1)}{8\lambda_2}|\nabla u|^2-\frac{4\lambda_2\kappa}{3(2+\lambda_1)}\leqslant0.\]
\end{proposition}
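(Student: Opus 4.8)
The plan is to apply Lemma~\ref{lem:est} with $n=4$, $f(u)=\lambda_2(1-\mathrm e^{4u})$, and the constant choices
\[
h_1\equiv \frac{3(6-\lambda_1)(2+\lambda_1)}{8\lambda_2},\qquad h_2\equiv\frac{4\lambda_2\kappa}{3(2+\lambda_1)},\qquad h_3\equiv 1,
\]
so that $\Phi=\Delta u+h_1|\nabla u|^2-h_2$ is precisely the quantity whose sign must be controlled. Since $-2<\lambda_1<6$ and $\lambda_2>0$, both $h_1$ and $h_2$ are positive, and one has the clean identity $h_1h_2=\tfrac12(6-\lambda_1)\kappa$. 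First I would substitute these into the matrix $A$ of Lemma~\ref{lem:est}.

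Two algebraic features drive the argument. Because $n=4$ and $h_1$ is constant, the entry $A_{22}$ vanishes; because $h_1h_2=\tfrac12(6-\lambda_1)\kappa$, the entry $A_{23}$ also vanishes. The remaining entries reduce to $A_{11}=K+\tfrac12$, $A_{12}=-1$, $A_{13}=\frac{6+\lambda_1}{2(6-\lambda_1)}>0$, and
\[
A_{33}=\frac{3(2+\lambda_1)}{2(6-\lambda_1)}\,\mathrm e^{4u}>0,
\]
the positivity of $A_{13}$ and $A_{33}$ being exactly what $-2<\lambda_1<6$ guarantees. Evaluating the quadratic form on $\vec v=(\Phi,\,h_1|\nabla u|^2,\,h_2)$ then gives
\[
\vec v A\vec v^{T}=\Big(K+\tfrac12\Big)\Phi^2-2\Phi\,h_1|\nabla u|^2+2A_{13}h_2\Phi+A_{33}h_2^2 .
\]

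The key observation is that in the region $\{\Phi\le0\}$ the troublesome cross term is favourable: $-2\Phi\,h_1|\nabla u|^2\ge0$, so there $\vec vA\vec v^{T}\ge(K+\tfrac12)\Phi^2+2A_{13}h_2\Phi+A_{33}h_2^2$, a quadratic in $\Phi$ that is nonnegative as soon as $(K+\tfrac12)A_{33}\ge A_{13}^2$; since $u$ is bounded on the compact $M$, $A_{33}$ stays bounded below by a positive constant, and a single large $K\ge0$ works uniformly. By Lemma~\ref{lem:est} this yields $\mathscr L\Phi\ge0$ wherever $\Phi\le0$. I would then invoke the continuity method of Lemma~\ref{lem:continuity} along the family $\Phi_t=\Delta u+h_1(t)|\nabla u|^2-h_2(t)$ obtained by letting $h_1(t)$ decrease to a small value while keeping the product $h_1(t)h_2(t)=\tfrac12(6-\lambda_1)\kappa$ fixed; this preserves $A_{23}=0$ and the lower bound on $A_{33}$ for every $t$, makes $\Phi_0<0$ (as $h_2(0)$ is large while $\Delta u$ and $|\nabla u|^2$ are bounded), and gives $\Phi_1=\Phi$. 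At the first contact time $t_0$ one has $\sup_M\Phi_{t_0}=0$, hence $\Phi_{t_0}\le0$ throughout $M$, placing us in the favourable region where $\mathscr L_{t_0}\Phi_{t_0}\ge0$; the strong maximum principle then forces $\Phi_{t_0}\equiv0$, and the continuity scheme delivers $\Phi\le0$.

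The main obstacle is arranging the quadratic form to be nonnegative \emph{only where it is needed}. Full positive semidefiniteness of $A$ fails: with $A_{22}=0$ and $A_{12}=-1$ one has $\vec vA\vec v^{T}<0$ for large $|\nabla u|^2$ whenever $\Phi>0$. The argument must therefore exploit that the proof of Lemma~\ref{lem:continuity} only probes the contact time $t_0$, where $\Phi_{t_0}\le0$ everywhere and the sign of the cross term flips in our favour; indeed $\mathscr L_t\Phi_t\ge0$ holds for every $t\le t_0$ (as $\Phi_t<0$ on $M$ for $t<t_0$), which is all the proof requires. Secondary points are the $u$-dependence of $A_{33}$ through $\mathrm e^{4u}$, handled by compactness, and the degenerate possibility $\Phi_t\equiv0$, which either occurs at the endpoint (whence $\Phi\le0$ trivially) or is excluded so that Lemma~\ref{lem:continuity} applies. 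Keeping $h_1h_2$ fixed along the deformation is the device that makes the intermediate matrices as tractable as the endpoint one.
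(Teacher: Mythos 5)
Your strategy is workable, but there is one essential step you assert without proof: ruling out \(\Phi_{t_0}\equiv 0\) at an \emph{interior} contact time \(t_0<1\). You write that this possibility ``either occurs at the endpoint (whence \(\Phi\leqslant0\) trivially) or is excluded,'' but no mechanism of exclusion is given, and the step cannot be waved away. Indeed, if \(\Phi_{t_0}\equiv0\) for some \(t_0<1\), then pointwise for every \(t>t_0\),
\[
\Phi_t=\Phi_{t_0}+\big(h_1(t)-h_1(t_0)\big)|\nabla u|^2+\big(h_2(t_0)-h_2(t)\big)\geqslant h_2(t_0)-h_2(t)>0,
\]
since along your deformation \(h_1\) increases and \(h_2\) decreases; so this scenario would force \(\Phi_1>0\) everywhere, i.e.\ the negation of the proposition, and it must be genuinely killed rather than deferred to Lemma \ref{lem:continuity}, whose hypothesis \(v_t\not\equiv0\) is exactly what is at stake. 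The fix is the same device the paper uses: \(\Phi_{t_0}\equiv0\) means \(\Delta u+h_1(t_0)|\nabla u|^2\equiv h_2(t_0)\), hence \(\operatorname{div}\big(\mathrm e^{h_1(t_0)u}\nabla u\big)=h_2(t_0)\mathrm e^{h_1(t_0)u}>0\), and integrating over the closed manifold \(M\) contradicts the divergence theorem. This one line (which also covers \(t_0=1\), so in fact you get \(\Phi_1<0\) strictly) closes your proof.

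Apart from that gap, your computations are correct (\(A_{22}=A_{23}=0\), \(A_{12}=-1\), \(A_{13}=\frac{6+\lambda_1}{2(6-\lambda_1)}\), \(A_{33}=\frac{3(2+\lambda_1)}{2(6-\lambda_1)}\mathrm e^{4u}\)), and your implementation genuinely differs from the paper's. The paper takes \(h_3=\mathrm e^{a_3u}\) with \(a_3=-a_1\), which kills \(A_{12}\) as well as \(A_{23}\), so the reduced quadratic form is nonnegative for large \(K\) with no sign restriction on \(\Phi\); but since it relaxes the nonlinearity to \(f\equiv\lambda_2\) (applied to the deformed solutions \(u_t=tu+(1-t)\)), its \(A_{33}\) vanishes at the critical value \(a_2=\frac{4\lambda_2}{3(2+\lambda_1)}\), forcing it to work with strictly supercritical \(a_2\) and pass to the limit at the end. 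You instead keep \(h_3\equiv1\) and the true nonlinearity \(f(u)=\lambda_2(1-\mathrm e^{4u})\): the surviving bad entry \(A_{12}=-1\) is neutralized by demanding nonnegativity only on \(\{\Phi\leqslant0\}\) (legitimate, since the contact-time argument only evaluates the inequality there---though, as you note, this means re-running the proof of Lemma \ref{lem:continuity} on \([0,t_0]\) rather than quoting it verbatim), and in exchange \(A_{33}\) stays uniformly positive at the critical parameters themselves, so you deform \((h_1,h_2)\) with \(u\) fixed and need no limiting step. Both routes rest on Lemmas \ref{lem:est}, \ref{lem:continuity} and \ref{lem:strong-extreme}; the paper's is algebraically cleaner, yours avoids the final approximation.
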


\begin{proof}
    For \(t\in[0,1]\), define \(u_t=tu+(1-t)\). Thus
    \[\Delta^2 u_t - \lambda_1\kappa\Delta u_t+ \lambda_2\kappa^2 = \lambda_2\kappa^2[t\mathrm e^{4u}+(1-t)]>0,\]
    which implies that \(u_t\) satisfies the differential inequality \eqref{ineq}. Take \(h_1(u)=a_1\), \(h_2(u)=a_2\kappa\), \(h_3(u)=\mathrm e^{a_3u}\) in Proposition \ref{lem:est}, where \(a_1,a_2>0\), \(a_3\in\mathbb R\), and \(\mathscr L_t\) is defined by \(\mathscr L\) w.r.t. \(u_t\). Thus for \(\Phi_t=\mathrm e^{a_3u_t} (\Delta u_t+a_1|\nabla u_t|^2-a_2\kappa)\), it follows from Proposition \ref{lem:est} that
    \[\mathrm e^{-a_3u_t}\mathscr L_t\Phi_t\geqslant a_1(\mathrm e^{-a_3u_t}\Phi_t,a_1|\nabla u_t|^2,a_2\kappa)A(\mathrm e^{-a_3u_t}\Phi_t,a_1|\nabla u_t|^2,a_2\kappa)^T,\]
    where \(A=(A_{ij})\) satisfies
    \[A_{11}=K+\frac 1 2+\frac{a_3}{a_1},\quad A_{12}=-\frac{(2a_1+a_3)(a_1+a_3)}{2a_1^2},\quad A_{13}=\frac{a_2(a_1+a_3)+\lambda_1}{2a_1a_2},\]
    \[A_{22}=0,\quad A_{23}=\frac{6-\lambda_1}{2a_1a_2}-1,\quad A_{33}=\frac 1 2+\frac{\lambda_1}{a_1a_2}-\frac{\lambda_2}{a_1a_2^2}.\]
    
    Take \(a_1=\frac{6-\lambda_1}{2a_2}=-a_3\), then \(A_{12}=A_{23}=0\), \(A_{33}>0\). Thus by choosing \(K\) sufficiently large, we ensure \(\mathscr L_t\Phi_t\geqslant0\). If \(\Phi_{t_0}\equiv 0\) for some \(t_0\in [0,1]\), then \(\mathrm{div}(\mathrm e^{a_1u_{t_0}} \nabla u_{t_0})=a_2\kappa\mathrm e^{a_1u_{t_0}}>0\). By integrating on \(M\), it contradicts the divergence theorem. Therefore, \(\Phi_t\not\equiv0\), \(\forall t\in[0,1]\). Combining with \(\Phi_0<0\), we conclude that \(\Phi_1<0\) from Lemma \ref{lem:continuity}, that is, 
    \[\Delta u+a_1|\nabla u|^2-a_2\kappa<0,\quad\forall(a_1,a_2)\in\Big\{a_1a_2=3-\frac{\lambda_1}{2},~a_2>\frac{4\lambda_2}{3(2+\lambda_1)}\Big\}.\]
    Let \(a_2\to\frac{4\lambda_2}{3(2+\lambda_1)}\), we complete the proof.
\end{proof}
\section{Fourth order Liouville equation: Proof of Theorem \ref{thm:Liouville}}\label{sec:Liouville}

In this section, \(n=4\), \(u\) is a solution to \eqref{eq:Liouville}, and the manifold \(M\), parameters \(\lambda_1,\lambda_2\) satisfy conditions in Theorem \ref{thm:Liouville}. We establish the differential identity using the invariant tensors technique. Using this identity, we prove Theorem \ref{thm:Liouville}. 

Define the trace-free tensor
\[E_{ij}:=u_{ij}+b u_i u_j-\frac{1}{4}(\Delta u+b|\nabla u|^2)g_{ij},\]
where \(b\) is a constant to be determined later. We trace the covariant derivative of \(E_{ij}\) to get
\[ {E_{ij,}}^i=\frac{3}{4} (\Delta u)_{,j}+R_{ij} u^i +b\Delta u u_j+\frac{b}{2} u_{ij} u^i.\]
Denoting that \(E_j= E_{ij} u^i\) and plugging the expression \(u_{ij}=E_{ij}-b u_i u_j+\frac{1}{4}(\Delta u+b|\nabla u|^2)g_{ij}\) into the above formula, we have
\begin{equation}\label{eq:divE}
{E_{ij,}}^i=\frac{b}{2}E_j+\frac{3}{4}F_j +R_{ij} u^i-3\kappa u_j,
\end{equation}
where \(F_j=(\Delta u)_{,j}+\frac{3b}{2}\Delta u u_j-\frac{b^2}{2}|\nabla u|^2 u_j+4\kappa u_j\). Thus,
\[{F_{i,}}^i=\Delta ^2 u+\frac{3b}{2} {(\Delta u)_,}^i u_i +\frac{3b}{2}(\Delta u)^2 -b^2 u^{ij} u_i u_j-\frac{b^2}{2} \Delta u |\nabla u|^2+4\kappa\Delta u.\]
Likewise, denote \(E= E_i u^i\), \(F= F_i u^i\) and replace \(u^{ij}\), \({(\Delta u)_,}^i\) by \(E^{ij}\), \(F^i\) respectively. Thus,
\begin{align}
{F_{i,}}^i=-b^2 E+\frac 3 2 b F+G, \label{eq:divF}
\end{align}
where \(G=\Delta ^2 u+\frac{3b}{2}(b|\nabla u|^2-\Delta u)^2-6b\kappa|\nabla u|^2+4\kappa\Delta u\).

To compute \({G_,}^i\), differentiating \eqref{eq:Liouville} gives that
\begin{align}\label{eq:nabladelta2u}
    {(\Delta ^2 u)_,}^i=~&\lambda_1\kappa {(\Delta u)_,}^i+4\Delta^2u u^i-4\lambda_1\kappa\Delta u u^i+4\lambda_2\kappa^2 u^i. 
\end{align}
Under further calculation, it yields that
\[{G_,}^i={(\Delta^2 u)_,}^i+3b(b|\nabla u|^2-\Delta u)[2bu^{ij}u_j-{(\Delta u)_,}^i]-12b\kappa u^{ij} u_j+4\kappa{(\Delta u)_,}^i.\]
Substituting \eqref{eq:nabladelta2u} to \({G_,}^i\) and replacing \(u^{ij}\), \({(\Delta u)_,}^i\), \(\Delta^2 u\) by \(E^{ij}\), \(F^i\), \(G\) respectively, we obtain
\begin{align}
    \begin{split}\label{eq:nablaG}
    {G_,}^i=~&3b(b|\nabla u|^2-\Delta u)(2bE^i-F^i)-12b\kappa E^i+(4+\lambda_1)\kappa F^i+4Gu^i\\
    &-6b(1+b)(b|\nabla u|^2-\Delta u)^2 u^i+b(24+23b+\frac b 2\lambda_1)\kappa|\nabla u|^2 u^i\\
    &-[16+21b+(4+\frac 3 2 b)\lambda_1]\kappa\Delta u u^i+4(\lambda_2-\lambda_1-4)\kappa^2 u^i.
    \end{split}
\end{align}
Take \(b=-1\) to eliminate the term \((b|\nabla u|^2-\Delta u)^2 u^i\). Thus, the following proposition states relationships between invariant tensors.

\begin{proposition}\label{prop:invariant-Liouville}
    Invariant tensors are chosen as
    \[E_{ij}=u_{ij}-u_i u_j-\frac{1}{4}(\Delta u-|\nabla u|^2)g_{ij},\]
    \[F_i=(\Delta u)_i-\frac{3}{2} \Delta u u_i-\frac{1}{2} |\nabla u|^2 u_i+4\kappa u_i,\]
    \[G=\Delta ^2 u-\frac{3}{2}(|\nabla u|^2+\Delta u)^2+6\kappa|\nabla u|^2+4\kappa\Delta u.\]
    Relationships between them via differentiation are as follows:
    \[{E_{i,}}^i=E_{ij}E^{ij}+\mathscr R+\frac{1}{2}E+\frac{3}{4}F,\quad{F_{i,}}^i=-E-\frac{3}{2}F+G,\]
    \begin{align*}
        {G_,}^i=~&-3(|\nabla u|^2+\Delta u)(2E^i+F^i)+12\kappa E^i+(4+\lambda_1)\kappa F^i+4Gu^i\\
        &-(1-\frac{\lambda_1}{2})(|\nabla u|^2-5\Delta u)\kappa u^i+4(\lambda_2-\lambda_1-4)\kappa^2 u^i,
    \end{align*}
    where \(\mathscr R:=R_{ij}u^iu^j-(n-1)\kappa|\nabla u|^2\geqslant0.\)
\end{proposition}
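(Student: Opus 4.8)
The plan is to read off all three relations from the general-$b$ identities \eqref{eq:divE}, \eqref{eq:divF} and \eqref{eq:nablaG} already derived above by specializing to $b=-1$, together with one additional divergence computation for the first relation. The identity ${F_{i,}}^i=-E-\frac32 F+G$ is simply \eqref{eq:divF} at $b=-1$, and the displayed form of $G$ is the $b=-1$ specialization of the $G$ appearing there. For ${G_,}^i$ I would substitute $b=-1$ into \eqref{eq:nablaG} and track the coefficients; the decisive feature, already noted, is that the factor $-6b(1+b)$ in front of $(b|\nabla u|^2-\Delta u)^2u^i$ vanishes, removing the only quartic obstruction. The surviving curvature-order terms then collapse cleanly, since $b(24+23b+\frac b2\lambda_1)=-(1-\frac{\lambda_1}{2})$ and $-[16+21b+(4+\frac32 b)\lambda_1]=5(1-\frac{\lambda_1}{2})$ at $b=-1$, so the $|\nabla u|^2u^i$ and $\Delta u\,u^i$ contributions merge into $-(1-\frac{\lambda_1}{2})(|\nabla u|^2-5\Delta u)\kappa u^i$. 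This part is routine bookkeeping.

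The one genuinely new computation is the first relation, because there ${E_{i,}}^i$ denotes the divergence of the \emph{vector} $E_i=E_{ij}u^j$ rather than the tensor divergence ${E_{ij,}}^i$ of \eqref{eq:divE}. I would expand
\[{E_{i,}}^i=\nabla^i\!\left(E_{ij}u^j\right)=\left({E_{ij,}}^i\right)u^j+E_{ij}u^{ij}.\]
Contracting \eqref{eq:divE} at $b=-1$ against $u^j$ turns its $R_{ij}u^i-3\kappa u_j$ part into exactly $R_{ij}u^iu^j-3\kappa|\nabla u|^2=\mathscr R$ (recall $n=4$), while the $\frac b2 E_j$ and $\frac34 F_j$ terms give $-\frac12 E+\frac34 F$. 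For the remaining piece I would insert the defining relation in the form $u^{ij}=E^{ij}+u^iu^j+\frac14(\Delta u-|\nabla u|^2)g^{ij}$ and use that $E_{ij}$ is trace-free, so that $E_{ij}g^{ij}=0$ and, with $E_{ij}u^iu^j=E_ju^j=E$, one gets $E_{ij}u^{ij}=E_{ij}E^{ij}+E$. Summing the two pieces produces ${E_{i,}}^i=E_{ij}E^{ij}+\mathscr R+\frac12 E+\frac34 F$, and $\mathscr R\geqslant0$ is immediate from $\mathrm{Ric}\geqslant3\kappa g$.

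I do not expect a serious obstacle: the entire tensorial framework is set up in the preceding computation, and what remains is careful sign- and coefficient-tracking at $b=-1$ together with the clean extraction of $E_{ij}E^{ij}$ from $E_{ij}u^{ij}$ via trace-freeness. The only point worth stressing is conceptual rather than technical, namely that choosing $b=-1$ is precisely what both eliminates the quartic term in ${G_,}^i$ and lets the manifestly nonnegative quadratic $E_{ij}E^{ij}$ emerge in ${E_{i,}}^i$. Together with $\mathscr R\geqslant0$, this nonnegative quadratic is the object that will drive the later integrated identity and force the rigidity dichotomy of Theorem \ref{thm:Liouville}.
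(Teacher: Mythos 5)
Your proposal is correct and takes essentially the same route as the paper: the proposition is obtained there precisely by specializing the general-$b$ identities \eqref{eq:divE}, \eqref{eq:divF}, \eqref{eq:nablaG} to $b=-1$, and all of your coefficient evaluations (the vanishing of $-6b(1+b)$, and the merging of the $|\nabla u|^2u^i$ and $\Delta u\,u^i$ terms into $-(1-\frac{\lambda_1}{2})(|\nabla u|^2-5\Delta u)\kappa u^i$) check out. Your explicit expansion ${E_{i,}}^i=({E_{ij,}}^i)u^j+E_{ij}u^{ij}$ together with $E_{ij}u^{ij}=E_{ij}E^{ij}+E$ from trace-freeness correctly supplies the one step the paper leaves implicit, namely the passage from the tensor divergence in \eqref{eq:divE} to the vector divergence stated in the proposition.
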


The key differential identity is established by Proposition \ref{prop:invariant-Liouville}.

\begin{proposition}
    Let \(c\) be a constant to be determined, then
    \begin{align}
        \begin{split}\label{id:Liouville}
            &\mathrm e^u\Big\{\mathrm e^{-u}\Big[-\frac 2 3[|\nabla u|^2+7\Delta u-2(8+\lambda_1)\kappa]E_i+(3|\nabla u|^2+\Delta u)F_i\\
            &~~~~~~~~~~~~~~~{-Gu_i+c(2-\lambda_1)\kappa|\nabla u|^2u_i\Big]\Big\}_{,}}^i\\
            =~&-\frac 2 3[|\nabla u|^2+7\Delta u-2(8+\lambda_1)\kappa](E_{ij}E^{ij}+\mathscr R)+(F_i+\frac 2 3E_i)(F^i+\frac 2 3E^i)\\
            &-\frac{16}{9}E_iE^i+\frac 1 2(2-\lambda_1)\kappa[(1+c)|\nabla u|^4-(5-3c)|\nabla u|^2\Delta u]\\
            &+2(\frac 1 3+c)(2-\lambda_1)\kappa E+4(4+\lambda_1-\lambda_2)\kappa^2|\nabla u|^2.
        \end{split}
    \end{align}
\end{proposition}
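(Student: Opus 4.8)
The plan is to read the left-hand side as a divergence minus a directional derivative and then to reduce everything, via Proposition \ref{prop:invariant-Liouville}, to the ``atoms'' \(E_{ij}E^{ij}\), \(\mathscr R\), \(E_iE^i\), \(E_iF^i\), \(F_iF^i\), \(E\), \(F\), \(G\) together with scalar polynomials in \(|\nabla u|^2,\Delta u,\kappa\). First I would name the bracketed \(1\)-form
\[V_i:=-\tfrac23 P\,E_i+Q\,F_i-G\,u_i+c(2-\lambda_1)\kappa|\nabla u|^2u_i,\]
where \(P:=|\nabla u|^2+7\Delta u-2(8+\lambda_1)\kappa\) and \(Q:=3|\nabla u|^2+\Delta u\). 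Since \(\nabla^i(\mathrm e^{-u})=-\mathrm e^{-u}u^i\), the left-hand side is \(\mathrm e^u\nabla^i(\mathrm e^{-u}V_i)={V_{i,}}^i-u^iV_i\), so it suffices to expand the divergence \({V_{i,}}^i\) and subtract the algebraic term \(u^iV_i=-\tfrac23PE+QF-G|\nabla u|^2+c(2-\lambda_1)\kappa|\nabla u|^4\).

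Next I would expand \({V_{i,}}^i\) by the Leibniz rule. The traces \({E_{i,}}^i\), \({F_{i,}}^i\) and the contraction \({G_,}^iu_i\) are handed to us directly by Proposition \ref{prop:invariant-Liouville}. The gradient factors \(P_{,i}\) and \(Q_{,i}\) need rewriting: reading off from the definitions of \(E_{ij}\) and \(F_i\) the relations \(u_{ij}u^j=E_i+(\tfrac34|\nabla u|^2+\tfrac14\Delta u)u_i\) and \((\Delta u)_{,i}=F_i+(\tfrac32\Delta u+\tfrac12|\nabla u|^2-4\kappa)u_i\), and using \((|\nabla u|^2)_{,i}=2u_{ij}u^j\), the contractions \(P_{,i}E^i\) and \(Q_{,i}F^i\) become the quadratic atoms \(E_iE^i,E_iF^i,F_iF^i\) plus scalar multiples of \(E\) and \(F\); likewise \((|\nabla u|^2)_{,i}u^i=2E+\tfrac32|\nabla u|^4+\tfrac12|\nabla u|^2\Delta u\). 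After this substitution the entire expression is a combination of the atoms above with coefficients that are polynomials in \(|\nabla u|^2,\Delta u,\kappa,\lambda_1,\lambda_2,c\).

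Finally I would collect terms by type, and I expect each of the following to occur by the very design of \(V_i\): the \(G\)-linear terms cancel identically — this is what forces \(Q=3|\nabla u|^2+\Delta u\) against the \(-Gu_i\) term and the \(4Gu^i\) in \({G_,}^i\); the scalar-times-\(F\) terms cancel once the \(-QF\) contribution from \(-u^iV_i\) is included; the quadratic part assembles as \(F_iF^i+\tfrac43E_iF^i-\tfrac43E_iE^i=(F_i+\tfrac23E_i)(F^i+\tfrac23E^i)-\tfrac{16}9E_iE^i\); the terms \(E_{ij}E^{ij}\) and \(\mathscr R\) enter only through \(-\tfrac23P\,{E_{i,}}^i\), giving the coefficient \(-\tfrac23P\); and the scalar-times-\(E\) terms together with the pure scalar terms collapse to \(2(\tfrac13+c)(2-\lambda_1)\kappa E\), to \(\tfrac12(2-\lambda_1)\kappa[(1+c)|\nabla u|^4-(5-3c)|\nabla u|^2\Delta u]\), and to \(4(4+\lambda_1-\lambda_2)\kappa^2|\nabla u|^2\), matching the claimed right-hand side.

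The computation itself is routine but long, so the bulk of the labor — and the step most prone to arithmetic slips — is the disciplined collection of the many scalar coefficients and the verification that the \(\kappa|\nabla u|^2E\) and \(\Delta u\,E\) coefficients vanish while only the \(\kappa\,E\) coefficient survives. The genuine content, however, is the reverse-engineering of \(V_i\): the weight \(7\Delta u\) and the constant \(-2(8+\lambda_1)\kappa\) inside \(P\), the factor \(-\tfrac23\), and the shape of \(Q\) are chosen precisely so that \(G\) disappears and the quadratic form becomes a perfect square minus a multiple of \(E_iE^i\). The free parameter \(c\) enters every stage linearly and is matched term-by-term on both sides, so it remains undetermined here and is fixed later when positivity of the identity is imposed.
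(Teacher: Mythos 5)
Your proposal is correct and is essentially the paper's own argument: both reduce the identity to the divergence relations of Proposition \ref{prop:invariant-Liouville} plus Leibniz-rule expansion and term collection, and your structural claims (cancellation of the \(G\)- and \(F\)-terms, the quadratic part assembling into \((F_i+\tfrac23E_i)(F^i+\tfrac23E^i)-\tfrac{16}{9}E_iE^i\), and the surviving \(\kappa E\), \(|\nabla u|^4\), \(|\nabla u|^2\Delta u\), \(\kappa^2|\nabla u|^2\) coefficients) all check out. The only difference is bookkeeping: the paper modularizes the same computation into seven building-block identities \(\mathrm e^u{(\mathrm e^{-u}X_i)_,}^i\) for \(X_i\in\{|\nabla u|^2E_i,\,\Delta u\,E_i,\,E_i,\,|\nabla u|^2F_i,\,\Delta u\,F_i,\,Gu_i,\,|\nabla u|^2u_i\}\) and then takes the linear combination with the coefficients you packaged into \(P\), \(Q\), and \(c\), whereas you expand the combined vector field in a single pass.
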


\begin{proof}
    By Proposition \ref{prop:invariant-Liouville}, we derive the following differential identities:
    \[\mathrm e^u{(\mathrm e^{-u}|\nabla u|^2 E_i)_{,}}^{i}=|\nabla u|^2(E_{ij}E^{ij}+\mathscr R)+2E_iE^i+|\nabla u|^2 E+\frac 1 2 \Delta u E+\frac{3}{4}|\nabla u|^2 F,\]
    \[\mathrm e^u{(\mathrm e^{-u} \Delta u E_i)_{,}}^i=\Delta u(E_{ij}E^{ij}+\mathscr R)+E_iF^i +\frac 1 2|\nabla u|^2 E +\Delta u E-4\kappa E+\frac{3}{4}\Delta u F,\]
    \[\mathrm e^u{(\mathrm e^{-u} E_i)_{,}}^i=E_{ij}E^{ij} +\mathscr R-\frac 1 2 E +\frac 3 4 F,\]
    \[\mathrm e^u{(\mathrm e^{-u}|\nabla u|^2 F_i)_{,}}^i=2E_i F^i-|\nabla u|^2 E -|\nabla u|^2 F+\frac 1 2 \Delta u F+|\nabla u|^2 G,\]
    \[\mathrm e^u{(\mathrm e^{-u} \Delta u F_i)_{,}}^i=F_iF^i -\Delta u E+\frac 1 2|\nabla u|^2 F-\Delta u F -4\kappa F+\Delta u G,\]
    \begin{align*}
        \mathrm e^u{(\mathrm e^{-u} G u_i)_{,}}^i=&-3(|\nabla u|^2+\Delta u)(2E+F)+12\kappa E+(4+\lambda_1)\kappa F+(3|\nabla u|^2+\Delta u)G\\
        &-(1-\frac{\lambda_1}{2})\kappa(|\nabla u|^2-5\Delta u)|\nabla u|^2+4(\lambda_2-\lambda_1-4)\kappa^2|\nabla u|^2,
    \end{align*}
    \[\mathrm e^u{(\mathrm e^{-u}|\nabla u|^2u_i)_{,}}^i=2E+\frac 1 2|\nabla u|^4+\frac 3 2|\nabla u|^2\Delta u.\]
    We finish the proof by linearly combining seven identities.
\end{proof}

\begin{proof}[\normalfont\bfseries Proof of Theorem \ref{thm:Liouville}]
    Assume that \(u\) isn't constant. Define \(L_{ij}=u_iu_j-\frac 1 4|\nabla u|^2g_{ij}\), then
    \[g_{ij}L^{ij}=0,\quad E_{ij}L^{ij}=E,\quad L_{ij}L^{ij}=\frac 3 4|\nabla u|^4.\]
    
    We claim that if \(-2<\lambda_1\leqslant2\), \(0<\lambda_2\leqslant L_1(\lambda_1)\), then the condition
    \begin{equation}
        \label{cond:Liouville-1}
        14\lambda_2 \leqslant 3(8+\lambda_1)(2+\lambda_1) 
    \end{equation}
    holds. We'll verify \eqref{cond:Liouville-1} at the end of each case. The condition \eqref{cond:Liouville-1} yields
    \begin{equation}\label{cond:Liouville-2}
        8\lambda_2<7(6-\lambda_1)(2+\lambda_1)\leqslant 21(6-\lambda_1)(2+\lambda_1)
    \end{equation}
    by verifying \(7(6-\lambda_1)(2+\lambda_1)-8\lambda_2\geqslant\frac 1 7(2+\lambda_1)(198-61\lambda_1)>0\).
    
    By Proposition \ref{prop:est-Liouville}, \(h:=\Delta u+\frac{3(6-\lambda_1)(2+\lambda_1)}{8\lambda_2}|\nabla u|^2-\frac{4\lambda_2\kappa}{3(2+\lambda_1)}\leqslant0\). Similar as the proof in \cite[Lemma 2.2]{MOW25}, we have \(|\nabla u|^2 E_{ij} E^{ij}\geqslant \frac 4 3 E_i E^i\). Thus, by \eqref{cond:Liouville-2},
    \begin{align}
        \begin{split}\label{ineq:Liouville-1}
            &-\frac 2 3[|\nabla u|^2+7\Delta u-2(8+\lambda_1)\kappa]E_{ij}E^{ij}-\frac{16}{9}E_iE^i\\
            \geqslant~&-\frac 2 3\Big[\Big(1-\frac{21(6-\lambda_1)(2+\lambda_1)}{8\lambda_2}\Big)|\nabla u|^2\\
            &~~~~~~~~~~+\Big(\frac{28\lambda_2}{3(2+\lambda_1)}-2(8+\lambda_1)\Big)\kappa\Big]E_{ij}E^{ij}-\frac{16}{9}E_iE^i\\
            \geqslant~&\frac 4 3\Big(8+\lambda_1-\frac{14\lambda_2}{3(2+\lambda_1)}\Big)\kappa E_{ij}E^{ij}+\frac 1 3\Big(\frac{7(6-\lambda_1)(2+\lambda_1)}{\lambda_2}-8\Big)E_iE^i\\
            \geqslant~&\frac 4 3\Big(8+\lambda_1-\frac{14\lambda_2}{3(2+\lambda_1)}\Big)\kappa E_{ij}E^{ij}.
        \end{split}
    \end{align}
    If equalities hold in \eqref{ineq:Liouville-1}, then \(E_i=0\) by \eqref{cond:Liouville-2}. By \eqref{ineq:Liouville-1} and the Divergence Theorem, \eqref{id:Liouville} yields
    \begin{align}
        \begin{split}\label{ineq:Liouville-2}
            0\geqslant~&\int_M\mathrm e^{-u}\Big[\frac 4 3\Big(8+\lambda_1-\frac{14\lambda_2}{3(2+\lambda_1)}\Big)E_{ij}E^{ij}+\frac 1 2(2-\lambda_1)[(1+c)|\nabla u|^4\\
            &~~~~~~~~~~~~~~-(5-3c)|\nabla u|^2\Delta u]+2(\frac 1 3+c)(2-\lambda_1)E+4(4+\lambda_1-\lambda_2)\kappa|\nabla u|^2\Big].
        \end{split}
    \end{align}
    If equality holds in \eqref{ineq:Liouville-2}, then \(E_i=F_i=0\) and \(\mathscr R=0\).
    
    \textbf{Case 1: \(\frac{2}{129}(\sqrt{18673}-64)\leqslant \lambda_1\leqslant 2\) and \(0<\lambda_2\leqslant4+\lambda_1\).} Take \(c=\frac 5 3\), then \eqref{ineq:Liouville-2} yields
    \begin{align}
        \begin{split}\label{ineq:Liouville-3}
            0\geqslant~&\int_M\mathrm e^{-u}\Big[\frac 4 3\Big(8+\lambda_1-\frac{14\lambda_2}{3(2+\lambda_1)}\Big)E_{ij}E^{ij}+\frac 4 3(2-\lambda_1)|\nabla u|^4\\
            &~~~~~~~~~~~~~~+4(2-\lambda_1)E+4(4+\lambda_1-\lambda_2)\kappa|\nabla u|^2\Big]\\
            =~&\int_M\mathrm e^{-u}\Big[(2-\lambda_1)\Big(\frac 3 2E_{ij}+\frac 4 3L_{ij}\Big)\Big(\frac 3 2E^{ij}+\frac 4 3L^{ij}\Big)\\
            &~~~~~~~~~~~~~~+\Big(\frac{74+43\lambda_1}{12}-\frac{56\lambda_2}{9(2+\lambda_1)}\Big)E_{ij}E^{ij}+4(4+\lambda_1-\lambda_2)\kappa|\nabla u|^2\Big].
        \end{split}
    \end{align}
    By \(\lambda_1\geqslant\frac{2}{129}(\sqrt{18673}-64)\) and \(\lambda_2\leqslant4+\lambda_1\),
    \begin{align*}
        \frac{74+43\lambda_1}{12}-\frac{56\lambda_2}{9(2+\lambda_1)}\geqslant\frac{74+43\lambda_1}{12}-\frac{56(4+\lambda_1)}{9(2+\lambda_1)}=\frac{129\lambda_1^2+256\lambda_1-452}{36(2+\lambda_1)}\geqslant0.
    \end{align*}
    Thus the equality must hold in \eqref{ineq:Liouville-3}, implying that \(E_i=F_i=0\), \(\mathscr R=0\) and \(\lambda_2=4+\lambda_1\).
    
    If \(\lambda_1<2\), then \(\frac 3 2E_{ij}+\frac 4 3L_{ij}=0\). We obtain
    \[0=(\frac 3 2E_{ij}+\frac 4 3L_{ij})u^j=\frac 3 2E_i+|\nabla u|^2u_i=|\nabla u|^2u_i,\]
    which is contradict to \(u\) is non-constant.
    
    If \(\lambda_1=2\), \(\lambda_2=6\), then \(E_{ij}=0\). Thus, the non-constant function \(\mathrm e^{-u}\) satisfies \eqref{ineq:rigidity}. By Lemma \ref{lem:rigidity}, \((M^4,g)\) is isometric to \((\mathbb{S}^4(\frac{1}{\sqrt{\kappa}}),g_c)\), and for some \(a\in \mathbb{R}^5 \setminus \{0\}\), \(c\in \mathbb R\),
    \[u(x)=-\log(\langle a,x\rangle_{\mathbb{R}^5} +c),~x\in {\mathbb{S}}^4(\frac{1}{\sqrt{\kappa}}).\]
    Integrating the equation \eqref{eq:Liouville} yields \(\int_{{\mathbb{S}}^4(\frac{1}{\sqrt{\kappa}})} \mathrm e^{4u} =1\). Thus,
    \begin{align*}
        1&=\int_{{\mathbb{S}}^4(\frac{1}{\sqrt{\kappa}})} (\langle a,x\rangle_{\mathbb{R}^5} +c)^{-4} =\frac{\kappa^2}{\omega_5} \int_{{\mathbb{B}}^5(\frac{1}{\sqrt{\kappa}})} \mathrm{div}[(\langle a,x\rangle_{\mathbb{R}^5} +c)^{-4} \sqrt{\kappa}x]\mathrm{d}x \\
        &=\frac{\kappa^{\frac{5}{2}}}{\omega_5} \int_{{\mathbb{B}}^5(\frac{1}{\sqrt{\kappa}})} \frac{|a|x_5 +5c}{(|a|x_5 +c)^5}\mathrm{d}x = \frac{\kappa^2 \omega_4}{4 \omega_5} \int_{-1}^1  \frac{|a|x_5 +5\sqrt{\kappa}c}{(|a|x_5+\sqrt{\kappa}c)^5}(1-x_5^2)^2 \mathrm{d}x_5.
    \end{align*}
    Notice that \(\sqrt{\kappa}|c|\geqslant |a|\) is required for the convergence of the integral, then
    \[1=\frac{4\kappa^2 \omega_4}{3\omega_5 (|a|^2-\kappa c^2)^2} =\frac{\kappa^2}{(|a|^2-\kappa c^2)^2}.\]
    It follows that \(c^2-\kappa^{-1}|a|^2=1\). Hence, the classification result \eqref{sol:Liouville} is obtained.
    
    Finally, if \(1<\frac{2}{129}(\sqrt{18673}-64)\leqslant\lambda_1\leqslant2\), \(\lambda_2\leqslant4+\lambda_1\), the condition \eqref{cond:Liouville-1} is verified by
    \[3(8+\lambda_1)(2+\lambda_1)-14\lambda_2\geqslant3\lambda_1^2+16\lambda_1-8>11.\]
    
    \textbf{Case 2: \(-2<\lambda_1<\frac{2}{129}(\sqrt{18673}-64)\) and \(0<\lambda_2\leqslant L_1(\lambda_1)\).} Take
    \[c=\frac 5 3-\frac{2(2+\lambda_1)(4+\lambda_1-\lambda_2)}{(2-\lambda_1)\lambda_2}<\frac 5 3.\]
    By replacing \(\Delta u\) with \(h\), \eqref{ineq:Liouville-2} becomes
    \begin{align}
        \begin{split}\label{ineq:Liouville-4}
            0\geqslant~&\int_M\mathrm e^{-u}\Big\{\frac 4 3\Big(8+\lambda_1-\frac{14\lambda_2}{3(2+\lambda_1)}\Big)E_{ij}E^{ij}+(2-\lambda_1)\Big[-\frac{5-3c}{2}|\nabla u|^2h\\
            &~~~~~~~~~~~~~~+\Big(\frac 4 3+\frac{(2+\lambda_1)(4+\lambda_1-\lambda_2)}{8(2-\lambda_1)\lambda_2^2}[9(6-\lambda_1)(2+\lambda_1)-8\lambda_2]\Big)|\nabla u|^4\Big]\\
            &~~~~~~~~~~~~~~+4\Big(2-\lambda_1-\frac{(2+\lambda_1)(4+\lambda_1-\lambda_2)}{\lambda_2}\Big)E\Big\}\\
            \geqslant~&\frac 4 3\int_M\mathrm e^{-u}\Big\{\Big(8+\lambda_1-\frac{14\lambda_2}{3(2+\lambda_1)}\Big)E_{ij}E^{ij}+\Big(\frac 4 3(2-\lambda_1)\\
            &~~~~~~~~~~~~~~+\frac{(2+\lambda_1)(4+\lambda_1-\lambda_2)}{8\lambda_2^2}[9(6-\lambda_1)(2+\lambda_1)-8\lambda_2]\Big)L_{ij}L^{ij}\Big]\\
            &~~~~~~~~~~~~~~+3\Big(2-\lambda_1-\frac{(2+\lambda_1)(4+\lambda_1-\lambda_2)}{\lambda_2}\Big)E\Big\}.
        \end{split}
    \end{align}
    By condition \eqref{cond:Liouville-1}, \(9(6-\lambda_1)(2+\lambda_1)-8\lambda_2\geqslant\frac 3 7(2+\lambda_1)(94-25\lambda_1)>0\), implying that the coefficient of \(L_{ij}L^{ij}\) is positive. The discriminant of the quadratic trinomial is
    \begin{align*}
        &9\Big(2-\lambda_1-\frac{(2+\lambda_1)(4+\lambda_1-\lambda_2)}{\lambda_2}\Big)^2-4\Big(8+\lambda_1-\frac{14\lambda_2}{3(2+\lambda_1)}\Big)\Big(\frac 4 3(2-\lambda_1)\\
        &+\frac{(2+\lambda_1)(4+\lambda_1-\lambda_2)}{8\lambda_2^2}[9(6-\lambda_1)(2+\lambda_1)-8\lambda_2]\Big)=\frac{3(2+\lambda_1)^2}{2\lambda_2^2}Q_1\Big(\frac{\lambda_2}{3(2+\lambda_1)}\Big),
    \end{align*}
    where \(Q_1(x)=112(14-\lambda_1) x^3-2(996+332\lambda_1-67\lambda_1^2)x^2\)
    \[+(1552+432\lambda_1-70\lambda_1^2-9\lambda_1^3)x-3(4+\lambda_1)(40-4\lambda_1-\lambda_1^2).\]
    The function \(Q_1\) is strictly increasing, because the discriminant of
    \[Q_1'(x)=336(14-\lambda_1) x^2-4(996+332\lambda_1-67\lambda_1^2)x+1552+432\lambda_1-70\lambda_1^2-9\lambda_1^3\]
    is negative when \(-2<\lambda_1\leqslant\frac{2}{129}(\sqrt{18673}-64)<1.13\), verified by
    \begin{align*}
        &16(-833136+283680\lambda_1+95368\lambda_1^2-39784\lambda_1^3+3733\lambda_1^4) \\
        <&16(-833136+283680\cdot 1.13+95368\cdot 1.13^2+39784\cdot 2^3+3733\cdot 2^4)=-204835.2128<0.
    \end{align*}
    Thus, by \(Q_1\Big(\frac{L(\lambda_1)}{3(2+\lambda_1)}\Big)=0\), the range \(0<\lambda_2\leqslant L_1(\lambda_1)\) is equivalent to \(Q_1\Big(\frac{\lambda_2}{3(2+\lambda_1)}\Big)\leqslant0\). When the discriminant is non-positive, the equality holds in \eqref{ineq:Liouville-4}, implying that \(E_{ij}=c_0L_{ij}\) for some constant \(c_0\neq0\), \(E_i=F_i=0\) and \(\mathscr R=0\). Thus,
    \[\frac 3 4c_0|\nabla u|^2 u_i=c_0L_{ij}u^j=c_0E_i=0,\]
    which is contradict to \(u\) is non-constant.
    
    Finally, we verify the condition \eqref{cond:Liouville-1}. By \(Q_1\Big(\frac{8+\lambda_1}{14}\Big)=\frac{6}{49}(20-\lambda_1)^2>0\) and \(Q_1\) is strictly increasing, we obtain \(\frac{8+\lambda_1}{14}>\frac{L(\lambda_1)}{3(2+\lambda_1)}\geqslant\frac{\lambda_2}{3(2+\lambda_1)}\).
\end{proof}
\section{Geometric inequality and the rigidity results: Proof of Theorem \ref{thm:Onofri}}\label{sec:Onofri}
In this section, \(\Lambda_1\) denotes the principal eigenvalue of \(-\Delta\). We first state the Poincaré inequality established by the orthogonal decomposition of the eigenfunctions of \(-\Delta\).
\begin{lemma}[\normalfont\bfseries Poincaré inequality] \label{lem:Poincare}
    \begin{equation}\label{ineq:Poincaré-H1}
        \int_M|\nabla f|^2\geqslant \Lambda_1\Big[\int_M f^2 -\Big(\int_M f\Big)^2\Big],~\forall f\in H^1 (M)
    \end{equation}
    \begin{equation}\label{ineq:Poincaré-H2}
        \int_M|\Delta f|^2\geqslant \Lambda_1 \int_M |\nabla f|^2,~\forall f\in H^2 (M)
    \end{equation}
    The equality holds if and only if \(-\Delta f=\Lambda_1\Big(f-\int_M f\Big)\).
\end{lemma}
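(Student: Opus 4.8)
The plan is to diagonalize $-\Delta$ via its spectral decomposition. On a smooth closed manifold, $-\Delta$ is a nonnegative self-adjoint operator with compact resolvent, so it admits a discrete spectrum $0=\Lambda_0<\Lambda_1\leqslant\Lambda_2\leqslant\cdots\to\infty$ together with an $L^2(M)$-orthonormal basis of smooth eigenfunctions $\{\phi_k\}_{k\geqslant0}$ satisfying $-\Delta\phi_k=\Lambda_k\phi_k$; since $\mathrm{vol}(M)=1$, the ground state is $\phi_0\equiv1$. For $f\in H^1(M)$ I would expand $f=\sum_k c_k\phi_k$ with $c_k=\int_M f\phi_k$, so that $\int_M f=c_0$, while Parseval gives $\int_M f^2=\sum_k c_k^2$. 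Integrating by parts against the eigenvalue equation then yields $\int_M|\nabla f|^2=-\int_M f\Delta f=\sum_k\Lambda_k c_k^2$.

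With these identities the first inequality is immediate, since $\Lambda_k\geqslant\Lambda_1$ for every $k\geqslant1$ and $\Lambda_0=0$:
\[\int_M|\nabla f|^2=\sum_{k\geqslant1}\Lambda_k c_k^2\geqslant\Lambda_1\sum_{k\geqslant1}c_k^2=\Lambda_1\Big(\sum_k c_k^2-c_0^2\Big)=\Lambda_1\Big[\int_M f^2-\Big(\int_M f\Big)^2\Big].\]
For the second inequality, for $f\in H^2(M)$ Parseval also gives $\int_M|\Delta f|^2=\sum_k\Lambda_k^2 c_k^2$, and the elementary bound $\Lambda_k^2=\Lambda_k\cdot\Lambda_k\geqslant\Lambda_1\Lambda_k$ for $k\geqslant1$ yields
\[\int_M|\Delta f|^2=\sum_{k\geqslant1}\Lambda_k^2 c_k^2\geqslant\Lambda_1\sum_{k\geqslant1}\Lambda_k c_k^2=\Lambda_1\int_M|\nabla f|^2.\]

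For the equality case I would track when each estimate is saturated: in both inequalities equality forces $\Lambda_k(\Lambda_k-\Lambda_1)c_k^2=0$ for all $k$, hence $c_k=0$ whenever $\Lambda_k>\Lambda_1$. Thus $f$ lies in the span of $\phi_0$ and the first eigenspace, which is precisely the condition $-\Delta f=\Lambda_1(f-c_0\phi_0)=\Lambda_1\big(f-\int_M f\big)$; conversely, this condition makes both estimates equalities. The only genuine technical point, rather than a real obstacle, is justifying that the eigenfunction expansion converges in $H^1$ (respectively $H^2$) so that the termwise integrations by parts are legitimate, which follows from standard elliptic regularity and spectral theory for $-\Delta$ on closed manifolds.
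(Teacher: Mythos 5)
Your proof is correct and follows exactly the route the paper intends: the paper states this lemma without a detailed proof, noting only that it is ``established by the orthogonal decomposition of the eigenfunctions of \(-\Delta\),'' which is precisely your spectral expansion with Parseval and termwise comparison of eigenvalues. Your treatment of the equality case and the remark on \(H^1\)/\(H^2\) convergence of the expansion supply the details the paper leaves implicit.
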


The following rigidity lemma is useful to deduce the lower bound of \(\Lambda_1\).

\begin{lemma}[\normalfont\bfseries Rigidity lemma] \label{lem:rigidity}
    Suppose that a non-constant function \(f\in H^2(M)\) satisfies
    \begin{equation}\label{ineq:rigidity}
        \operatorname{Ric}(\nabla f,\nabla f)=(n-1)\kappa|\nabla f|^2,~\nabla^2f=\frac{\Delta f}{n}g,
    \end{equation}
    then \((M^n,g)\) is isometric to \(\Big(\mathbb{S}^n(\frac{1}{\sqrt{\kappa}}),g_c\Big)\). Moreover, \( f-\int_{\mathbb{S}^n(\frac{1}{\sqrt{\kappa}})} f\) is the principal eigenfunction of \(-\Delta_{\mathbb{S}^n(\frac{1}{\sqrt{\kappa}})}\), namely
    \[f(x)=\langle a,x\rangle_{\mathbb{R}^{n+1}} +c,\quad x\in {\mathbb{S}}^n(\frac{1}{\sqrt{\kappa}}),\quad a\in \mathbb{R}^{n+1} \setminus \{0\},\quad c\in \mathbb{R}.\]
\end{lemma}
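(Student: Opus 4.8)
The plan is to reduce the two pointwise hypotheses in \eqref{ineq:rigidity} to the classical Obata equation $\nabla^2\tilde f=-\kappa\tilde f\,g$ for a suitable affine modification $\tilde f$ of $f$, and then to invoke the sphere rigidity theorem. Throughout I would use the standing curvature assumption $\operatorname{Ric}\geqslant(n-1)\kappa g$ of the paper, which is what makes the scalar hypothesis as strong as a vector identity.

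The first and only genuinely new point is to upgrade the scalar Ricci condition to a pointwise eigenvector identity. At each point the symmetric form $\operatorname{Ric}-(n-1)\kappa g$ is positive semidefinite, while $\big(\operatorname{Ric}-(n-1)\kappa g\big)(\nabla f,\nabla f)=0$ by \eqref{ineq:rigidity}; since a vector on which a positive semidefinite quadratic form vanishes must lie in its kernel, this forces $\operatorname{Ric}(\nabla f,\cdot)=(n-1)\kappa\,\nabla f$ \emph{as vectors}, not merely as a quadratic expression. Everything afterward is standard.

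Next I would differentiate the Hessian condition. Writing $\phi:=\tfrac1n\Delta f$, the hypothesis reads $f_{,ij}=\phi g_{ij}$. Applying the commutation formula $\Delta\nabla f=\nabla\Delta f+\operatorname{Ric}(\nabla f,\cdot)$ already used in Lemma \ref{lem:est}, the left-hand side equals $\nabla^j(\phi g_{ij})=\nabla_i\phi$, while the right-hand side equals $n\nabla_i\phi+\operatorname{Ric}(\nabla f,\cdot)_i$; hence $\nabla\phi=-\tfrac1{n-1}\operatorname{Ric}(\nabla f,\cdot)=-\kappa\nabla f$ by the previous step. Since $M$ is connected, integrating gives $\phi=-\kappa f+c_0$ for a constant $c_0$. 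Setting $\tilde f:=f-c_0/\kappa$ I then obtain simultaneously $\nabla^2\tilde f=\phi g=-\kappa\tilde f\,g$ and $\Delta\tilde f=-n\kappa\tilde f$.

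Finally I would invoke Obata's rigidity theorem: a closed Riemannian manifold carrying a non-constant solution of $\nabla^2\tilde f=-\kappa\tilde f\,g$ is isometric to $(\mathbb S^n(1/\sqrt\kappa),g_c)$. If a self-contained argument is preferred, this is where the real work lies: taking a maximum point $p$ of $\tilde f$ and integrating the ODE $(\tilde f\circ\gamma)''=-\kappa(\tilde f\circ\gamma)$ along unit-speed geodesics $\gamma$ emanating from $p$ shows that $\tilde f$ depends only on $r=d(p,\cdot)$ and has the profile $\cos(\sqrt\kappa\,r)$, which pins down the warped-product metric $dr^2+\kappa^{-1}\sin^2(\sqrt\kappa\,r)\,g_{\mathbb S^{n-1}}$ as the round sphere. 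Under this isometry the relation $\Delta\tilde f=-n\kappa\tilde f$ identifies $\tilde f$ as a first eigenfunction of $-\Delta$ (eigenvalue $\Lambda_1=n\kappa$), i.e. the restriction of a linear form $\langle a,x\rangle_{\mathbb R^{n+1}}$; hence $f=\langle a,x\rangle_{\mathbb R^{n+1}}+c$ with $c=c_0/\kappa$ and $a\neq0$ since $f$ is non-constant, as claimed. I expect the sphere-isometry input to be the only substantial ingredient; the regularity needed to run the pointwise third-derivative computations is supplied by the application, where $f=\mathrm e^{-u}$ with $u\in C^4$.
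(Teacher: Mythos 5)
Your proof is correct, but it takes a genuinely different route from the paper's. The paper argues globally and spectrally: after normalizing \(\int_M f=0\), it inserts the two hypotheses into the integral Bochner formula to get \(\int_M|\Delta f|^2=n\kappa\int_M|\nabla f|^2\), combines this with the Poincar\'e inequality \eqref{ineq:Poincaré-H2} and the lower bound \(\Lambda_1\geqslant n\kappa\) of Lemma \ref{lem:lower bound} to pin down \(\Lambda_1=n\kappa\), then expands \(\int_M|\Delta f+n\kappa f|^2\leqslant0\) to conclude \(-\Delta f=n\kappa f\), and only then passes to Obata's theorem in \cite{Oba62} (the eigenfunction equation together with the Hessian hypothesis being exactly Obata's equation \(\nabla^2f=-\kappa f\,g\)). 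You argue locally and pointwise: the kernel argument upgrading \(\operatorname{Ric}(\nabla f,\nabla f)=(n-1)\kappa|\nabla f|^2\) to \(\operatorname{Ric}(\nabla f,\cdot)=(n-1)\kappa\,\nabla f\) is valid (Cauchy--Schwarz for the semidefinite form \(\operatorname{Ric}-(n-1)\kappa g\), legitimately using the paper's standing curvature assumption --- a step the paper never needs), the contracted commutation formula then gives \(\nabla\big(\frac{1}{n}\Delta f+\kappa f\big)=0\), and integration on connected \(M\) produces Obata's equation for \(\tilde f=f-c_0/\kappa\) directly. What each buys: your derivation bypasses the Poincar\'e/eigenvalue machinery (Lemmas \ref{lem:Poincare} and \ref{lem:lower bound}) entirely and, being local, would extend to complete non-compact manifolds, where Obata's theorem still applies; the paper's integral argument, by contrast, is calibrated to the stated \(H^2(M)\) regularity, since it never differentiates the hypotheses. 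That is the one soft spot in your write-up: you need third derivatives of \(f\), and deferring to the application (\(f=\mathrm e^{-u}\), \(u\in C^4\)) does not prove the lemma as stated for \(f\in H^2(M)\). The patch is standard and short: the commutation formula holds distributionally for \(f\in H^2\) by density of smooth functions, so \(\frac{1}{n}\Delta f+\kappa f\) equals a constant \(c_0\) almost everywhere; then \(\Delta f=-n\kappa f+nc_0\) in \(L^2\), elliptic regularity bootstraps \(f\) to \(C^\infty\), and your pointwise argument runs classically. You should state this explicitly rather than lean on the application's regularity.
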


\begin{proof}
    Since \eqref{ineq:rigidity} holds for any \(f+C\), where \(C\) is a constant, we assume that \( \int_M f=0\) without loss of generality. Using the Bochner formula, we have
    \[0=\int_M |\nabla^2 f|^2-\int_M |\Delta f|^2 + \int_M \operatorname{Ric}(\nabla f,\nabla f)=\frac{n-1}{n}\Big(n\kappa \int_M|\nabla f|^2- \int_M |\Delta f|^2 \Big),\]
    then \(\Lambda_1\leqslant n\kappa\) is obtained coupling \eqref{ineq:Poincaré-H2} with the fact that \(f\) is non-constant. Furthermore, \(\Lambda_1 = n\kappa\) is ensured by Lemma \ref{lem:lower bound}. For the statement of eigenfunction, \eqref{ineq:Poincaré-H1} yields
    \begin{align*}
        \int_M |\Delta f+n\kappa f|^2&=\int_M |\Delta f|^2-2n\kappa \int_M |\nabla f|^2 + n^2 \kappa^2 \int_M f^2\\
        &=-n\kappa \Big( \int_M|\nabla f|^2- n\kappa\int_M f^2 \Big)\leqslant 0.
    \end{align*}
    We immediately get \(-\Delta f=n\kappa f\) and complete the proof by using Theorem 2 in \cite{Oba62}.
\end{proof}

\begin{lemma}[\normalfont\bfseries The lower bound of the principal eigenvalue]\label{lem:lower bound}
    \(\Lambda_1\geqslant n\kappa\). In particular,
    \begin{equation}\label{ineq:Poincaré-est}
        \int_M|\Delta f|^2\geqslant n\kappa\int_M|\nabla f|^2,~\forall f\in H^2(M) 
    \end{equation}
    unless \(f\) is constant. The equality holds iff \((M^n,g)\) is isometric to \(\Big(\mathbb{S}^n(\frac{1}{\sqrt{\kappa}}),g_c\Big)\), and
    \[f(x)=\langle a,x\rangle_{\mathbb{R}^{n+1}} +c,\quad x\in {\mathbb{S}}^n(\frac{1}{\sqrt{\kappa}}),\quad a\in \mathbb{R}^{n+1} \setminus \{0\},\quad c\in \mathbb{R}.\]
\end{lemma}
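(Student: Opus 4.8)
The plan is to prove this as the classical Lichnerowicz--Obata estimate, via the Bochner formula. First I would establish the inequality \eqref{ineq:Poincaré-est} for smooth $f$, the general $H^2$ statement following by density since the eigenfunctions entering the eigenvalue bound are smooth by elliptic regularity. Applying the Bochner formula
\[\frac12\Delta|\nabla f|^2 = |\nabla^2 f|^2 + \langle\nabla f,\nabla\Delta f\rangle + \operatorname{Ric}(\nabla f,\nabla f)\]
and integrating over the closed manifold $M$ kills the left-hand side, while integration by parts turns $\int_M\langle\nabla f,\nabla\Delta f\rangle$ into $-\int_M(\Delta f)^2$. This yields the identity $\int_M|\nabla^2 f|^2 = \int_M(\Delta f)^2 - \int_M\operatorname{Ric}(\nabla f,\nabla f)$.

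Next I would package the two pointwise estimates. Set $P := \int_M|\nabla^2 f|^2 - \frac1n\int_M(\Delta f)^2$ and $Q := \int_M\operatorname{Ric}(\nabla f,\nabla f) - (n-1)\kappa\int_M|\nabla f|^2$; both are nonnegative, the first by the trace Cauchy--Schwarz inequality $|\nabla^2 f|^2\geq\frac1n(\Delta f)^2$ and the second by the hypothesis $\operatorname{Ric}\geq(n-1)\kappa g$, and in each case the integrand is pointwise nonnegative. Substituting the integrated Bochner identity gives the clean relation $\int_M(\Delta f)^2 - n\kappa\int_M|\nabla f|^2 = \frac{n}{n-1}(P+Q)\geq0$, which is exactly \eqref{ineq:Poincaré-est}. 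The eigenvalue bound $\Lambda_1\geq n\kappa$ then follows by feeding a first eigenfunction $f$ with $-\Delta f=\Lambda_1 f$ into \eqref{ineq:Poincaré-est}: since $\int_M(\Delta f)^2=\Lambda_1^2\int_M f^2$ and $\int_M|\nabla f|^2=\Lambda_1\int_M f^2$ with $f$ nonconstant, I obtain $\Lambda_1^2\geq n\kappa\Lambda_1$.

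For the equality case I would trace the inequality backwards. Equality in \eqref{ineq:Poincaré-est} forces $P=Q=0$, and since each integrand is pointwise nonnegative this gives $\nabla^2 f=\frac{\Delta f}{n}g$ and $\operatorname{Ric}(\nabla f,\nabla f)=(n-1)\kappa|\nabla f|^2$ everywhere, which are precisely the hypotheses \eqref{ineq:rigidity}. Lemma \ref{lem:rigidity} then identifies $(M^n,g)$ with $\big(\mathbb S^n(1/\sqrt\kappa),g_c\big)$ and puts $f$ in the stated linear form. For the converse I would verify directly on the round sphere that the coordinate restrictions $f=\langle a,x\rangle_{\mathbb R^{n+1}}+c$ satisfy $-\Delta f=n\kappa f$ and realize equality.

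The main point to watch is the logical dependency: Lemma \ref{lem:rigidity} itself invokes the present lemma for the step $\Lambda_1=n\kappa$, so I would be careful to prove the inequality $\Lambda_1\geq n\kappa$ first and entirely independently, using only Bochner and the curvature bound, and only afterward appeal to Lemma \ref{lem:rigidity} in the equality discussion, so that the argument is not circular. The one genuinely technical step is the passage from the integrated equalities $P=Q=0$ to the pointwise identities \eqref{ineq:rigidity}, which is legitimate precisely because the two integrands do not change sign; together with the routine density reduction of \eqref{ineq:Poincaré-est} to smooth functions, this completes the proof.
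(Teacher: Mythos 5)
Your proposal is correct and follows essentially the same route as the paper's own proof: the integrated Bochner formula, the pointwise trace inequality \(|\nabla^2 f|^2\geqslant\frac{1}{n}|\Delta f|^2\), and the curvature bound \(\operatorname{Ric}\geqslant(n-1)\kappa g\) give \eqref{ineq:Poincaré-est}, the bound \(\Lambda_1\geqslant n\kappa\) follows by testing with a first eigenfunction, and the equality case reduces to Lemma \ref{lem:rigidity} via the pointwise conditions \eqref{ineq:rigidity}. Your explicit attention to avoiding circularity (proving \(\Lambda_1\geqslant n\kappa\) before invoking Lemma \ref{lem:rigidity}) is precisely the point the paper makes in the remark following its proof.
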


\begin{proof}
    The Bochner formula gives \(\int_M|\nabla^2 f|^2-\int_M|\Delta f|^2+\int_M\operatorname{Ric}(\nabla f,\nabla f)=0\). Notice that
    \[|\nabla^2 f|^2=\Big|\nabla^2 f-\frac{\Delta f}{n}g\Big|^2+\frac 1 n|\Delta f|^2\geqslant\frac 1 n|\Delta f|^2.\]
    Combing with \(\operatorname{Ric}\geqslant (n-1)\kappa g\), we deduce \eqref{ineq:Poincaré-est}. Replacing \(f\) by the non-constant function which ensures that the equality in \eqref{ineq:Poincaré-H2} holds, we obtain that \(\Lambda_1\geqslant n\kappa\). To conclude, the rigidity statements follow from Lemma \ref{lem:rigidity}, thanks to \eqref{ineq:rigidity}.
\end{proof}
\begin{remark}
    We prove the lower bound of the principal eigenvalue in Lemma \ref{lem:lower bound} without the equality holding condition in Lemma \ref{lem:rigidity}. Therefore, the above proof makes sense.
\end{remark}

\begin{proof}[\normalfont\bfseries Proof of Theorem \ref{thm:Onofri}]

    We prove Theorem \ref{thm:Onofri} by giving \eqref{ineq:Onofri-lambda} and its rigidity.

    For \(f\in C^4(\mathbb{S}^4(\frac{1}{\sqrt{\kappa}})),~\lambda_1 \in \mathbb R\), consider the functional
    \[J[f;\lambda_1]=\int_{\mathbb{S}^4(\frac{1}{\sqrt{\kappa}})}|\Delta f|^2+\lambda_1\kappa\int_{\mathbb{S}^4(\frac{1}{\sqrt{\kappa}})}|\nabla f|^2-8(4+\lambda_1)\kappa^2\Big(\log\int_{\mathbb{S}^4(\frac{1}{\sqrt{\kappa}})} \mathrm e^f-\int_{\mathbb{S}^4(\frac{1}{\sqrt{\kappa}})} f\Big).\]
    From Theorem 1 in \cite{Bec93} we have that \(J[f;2]\geqslant 0\), and \(J[f;-4]\geqslant 0\) corresponds to \eqref{ineq:Poincaré-H2} in Lemma \ref{lem:Poincare}. Then \(J[f;\lambda_1]\geqslant 0,~-4\leqslant \lambda_1 \leqslant 2\) follows from the linearity of \(J\) w.r.t. \(\lambda_1\). Combining the Jensen's inequality \(\int_{\mathbb{S}^4(\frac{1}{\sqrt{\kappa}})} f \leqslant \log\int_{\mathbb{S}^4(\frac{1}{\sqrt{\kappa}})} \mathrm e^f\) with \(\lambda_2\leqslant 4+ \lambda_1\), we show \eqref{ineq:Onofri-lambda}.

    For the rigidity statement, we calculate the variation of \(J[f]\) for any \(g\in C^{\infty}(M)\):
    \begin{align*}
        \frac{\mathrm d}{\mathrm d\varepsilon}\Big|_{\varepsilon=0}J[f+\varepsilon g;\lambda_1]=&~2\int_{\mathbb{S}^4(\frac{1}{\sqrt{\kappa}})}\Delta f\Delta g+2\lambda_1\kappa\int_{\mathbb{S}^4(\frac{1}{\sqrt{\kappa}})}\nabla f\cdot\nabla g\\
        &-8(4+\lambda_1)\kappa^2\Big[\Big(\int_{\mathbb{S}^4(\frac{1}{\sqrt{\kappa}})}\mathrm e^f\Big)^{-1}\int_{\mathbb{S}^4(\frac{1}{\sqrt{\kappa}})}\mathrm e^fg-\int_{\mathbb{S}^4(\frac{1}{\sqrt{\kappa}})} g\Big].
    \end{align*}
    Consider \(\frac{\mathrm d}{\mathrm d\varepsilon}\Big|_{\varepsilon=0}J[f+\varepsilon g;\lambda_1]=0\) and we obtain that \(f\) satisfies the equation
    \[\Delta^2f-\lambda_1\kappa\Delta f+4(4+\lambda_1)\kappa^2\Big[1-\Big(\int_{\mathbb{S}^4(\frac{1}{\sqrt{\kappa}})}\mathrm e^f\Big)^{-1}\mathrm e^f\Big]=0.\]
    Since it holds for any \(f+C\), where \(C\) is a constant, we assume that \( \int_{\mathbb{S}^4(\frac{1}{\sqrt{\kappa}})} \mathrm e^f=1\) without loss of generality, implying that \(u=\frac{f}{4}\) satisfies Liouville equation \eqref{eq:Liouville} with \(\lambda_2=4+\lambda_1\). By Theorem \ref{thm:Liouville}, for \(\frac{3}{2}\leqslant\lambda_1<2\), \(f\) is constant, which gives the conditions when the equality in \(J[f;\lambda_1]\geqslant 0\) is attained. The rigidity of \(J[f;2] \geqslant 0 \) follows by the classification result in Theorem \ref{thm:Liouville} similarly.
    
    For \(\lambda_1\in(-4,\frac{3}{2})\), there exists \(\theta\in(0,1)\) such that 
    \[J[f;\lambda_1]=\theta J[f;\frac{3}{2}]+(1-\theta) J[f;-4].\]
    By rigidity of \(J[f;\frac{3}{2}] \geqslant 0 \), we conclude that for \(-4<\lambda_1<2\) and \(0<\lambda_2<4+\lambda_1\), the equality in \eqref{ineq:Onofri-lambda} is attained iff \(f\) is constant. By the Jensen's inequality, \(\int_{\mathbb{S}^4(\frac{1}{\sqrt{\kappa}})} f \leqslant \log\int_{\mathbb{S}^4(\frac{1}{\sqrt{\kappa}})} \mathrm e^f\) holds iff \(f\) is constant. Thus we obtain the rigidity result for \(-4<\lambda_1\leqslant 2\) and \(0<\lambda_2<4+\lambda_1\).
\end{proof}

\textbf{Acknowledgments.} Xi-Nan Ma was supported by the National Natural Science Foundation of China (Grant No. 12141105) and the National Key Research and Development Project (Grant No. SQ2020YFA070080). Tian Wu was supported by Anhui Postdoctoral Scientific Research Program Foundation (Grant No. 2025B1055).


\footnotesize{
    Welcome contact us:
    \begin{itemize}
        \item Xi-Nan Ma, School of Mathematical Sciences, University of Science and Technology of China, Hefei, Anhui, 230026, People's Republic of China. Email: \emph{xinan@ustc.edu.cn}
        \item Tian Wu, School of Mathematical Sciences, University of Science and Technology of China, Hefei, Anhui, 230026, People's Republic of China. Email: \emph{wt1997@ustc.edu.cn}
        \item Xiao Zhou, School of Mathematical Sciences, University of Science and Technology of China, Hefei, Anhui, 230026, People's Republic of China. Email: \emph{ZX238000@mail.ustc.edu.cn}
    \end{itemize}
}

\end{document}